\newtheorem{theorem}{Theorem}[section] 
\newtheorem{lemma}[theorem]{Lemma} 
\newtheorem{prop}[theorem]{Proposition}
\theoremstyle{definition} 
\newtheorem{remark}[theorem]{Remark} 
\newtheorem{definition}[theorem]{Definition} 
\newtheorem{example}[theorem]{Example}
\newcommand{\U}{\mathcal{U}}
\newcommand{\card}{\operatorname{card}}
\newcommand{\pH}{p^{\mathrm{H}}}
\newcommand{\dH}{d^{\mathrm{H}}}
\newcommand{\dB}{d^{\mathrm{B}}}
\newcommand\N{\mathbb{N}}
\newcommand\FF{\mathcal{F}}
\newcommand{\id}{\mathrm{id}}
\newcommand{\be}{\begin{equation}}
\newcommand{\ee}{\end{equation}}
\newcommand{\tv}{\tau_{\mathrm{V}}}
\newcommand{\KK}{\mathcal K}
\newcommand{\vp}{\varphi}
\newcommand{\B}{{\mathcal{B}}}
\newcommand{\A}{{\mathcal{A}}}
\newcommand{\ep}{{\varepsilon}}
\newcommand\restr[1]{\raisebox{-.1ex}{$\big\vert$}_{#1}}
\newcommand{\orbn}{{\operatorname{Orb}_n}}
\newcommand\orb[1]{{\operatorname{Orb}_{#1}}}
\newcommand{\KT}{\operatorname{KTRT}}
\newcommand{\ANR}{\operatorname{ANR}}
\renewcommand{\H}{\operatorname{H}}
\newcommand{\CM}{\operatorname{CMM}}
\newcommand{\hKT}{h_{\operatorname{KTRT}}}
\newcommand{\hCM}{h_{\operatorname{CMM}}}
\newcommand{\hrsp}{h_{\rho}^{\operatorname{span}}}
\newcommand{\hrse}{h_{\rho}^{\operatorname{sep}}}
\newcommand{\hi}{h_{\mathfrak{i}}}
\newcommand{\hU}{h_{U}}
\newcommand{\hL}{h_{L}}
\newcommand{\hH}{h_{\operatorname{H}}}
\newcommand{\hB}{h_{\operatorname{B}}}
\newcommand{\hHsp}{h_{\H}^{\operatorname{span}}}
\newcommand{\hHse}{h_{\H}^{\operatorname{sep}}}
\newcommand{\pnH}{p_n^{\mathrm{H}}}
\newcommand{\spa}{\operatorname{span}}
\newcommand{\sepp}{\operatorname{sep}}
\newenvironment{rrcases}{\left.\begin{aligned}}{\end{aligned}\right\rbrace}
\begin{document}

\begin{abstract}
We will consider various definitions of topological entropy for multivalued nonautonomous dynamical systems in compact Hausdorff spaces. Some of them can deal with arbitrary multivalued maps, i.e. when no restrictions are imposed on them. For upper semicontinuous multivalued maps, we still investigate especially their relationship to the parametric topological entropy of the induced (possibly discontinuous) dynamical systems in hyperspaces. A comparison of various sorts of definitions can allow us, rather than direct calculations, to make easier the entropy estimates. Several illustrative examples are supplied.
\end{abstract}

\keywords{Parametric topological entropy, multivalued nonautonomous maps, compact Hausdorff spaces, hyperspaces, induced nonautonomous hypermaps}

\thanks{The authors were supported by the Grant IGA\_PrF\_2024\_006 ``Mathematical Models'' of the Internal Grant Agency of Palack\'y University in Olomouc.}

\subjclass[2020]{Primary 37B40, 37B55, 54B20, 54C60; Secondary 38C15, 54C70}

\title[Parametric topological entropy of possibly discontinuous maps]{Parametric topological entropy of possibly discontinuous maps in compact Hausdorff spaces and hyperspaces}

\author[J. Andres]{Jan Andres}
\address{Department of Mathematical Analysis and Applications of Mathematics, Faculty of Science, Palack\'y University, 17. listopadu 12, 771 46 Olomouc, Czech Republic}
\author[P. Ludv\'\i k]{Pavel Ludv\'\i k}
\email{pavel.ludvik@upol.cz}

\maketitle

\section{Introduction}
The classical topological entropy is an important topological invariant which measures the complexity of dynamical systems behaviour. The single-valued maps under consideration in its standard definitions are at least continuous (cf. \cite{AKM, Bo,Di,Ho,KS}). The extensions to multivalued maps concern mainly with upper semicontinuous maps (cf. \cite{AK,An,AL1,AL2,CP,EK,KT,RT,WZZ}) or less frequently with lower semicontinuous maps (cf. \cite{An,AL1,AL2}). Multivalued dynamics play with this respect an increasing role, especially in economic applications (see e.g. \cite{RS,St,Vo1,Vo2}, and the references therein).

On the other hand, as observed e.g. in \cite{JS,KLP,Ma,MZ,NS,Pw,PLB}, a less amount of regularity (like a piece-wise continuity or an almost continuity) can be required in the related definitions of topological entropy of single-valued maps, or they can be quite arbitrary (cf. \cite{Ci,SS}). Since even multivalued maps can be arbitrary (cf. \cite{AL4,CMM,SS}), the following natural questions arise:
\begin{enumerate}
	\item[(i)] Are the definitions of topological entropy $h(\vp)$ for arbitrary (single-valued or multivalued) maps $\vp:X\multimap X$ correct?
	\item[(ii)] If so, can the possibly discontinuous hypermaps $\vp^*:\KK(X)\to\KK(X)$ in the hyperspace $\KK(X):=\{K\subset X: \mbox{ $K$ is a nonempty compact subset of $X$}\}$, which are induced by upper semicontinuous maps $\vp:X\to \KK(X)$, satisfy the inequality $h(\vp)\leq h(\vp^*)$?
\end{enumerate}

Let us note that such an inequality was already obtained for single-valued continuous maps in e.g. \cite{BS,BV,CL,QWW,KO,LWW,MHL,Sh2}, and for multivalued maps in \cite{An,AL1,AL2}. As far as we know, there are no such results at the absence of continuity.

Since for the calculations and estimates of values of topological entropy of (possibly arbitrary) multivalued maps and their single-valued selections the mutual relationship among various sorts of definitions can make the process easier and more transparent, we will also pay attention to the comparison of definitions.

Hence, our main aim will be to answer at least partly the problems indicated above, provided $X$ is a compact Hausdorff space and the multivalued dynamical systems are, more generally than above, nonautonomous. It means that, instead of a single multivalued map $\vp:X\multimap X$, we would like to consider a sequence $\vp_{0,\infty}$ of multivalued maps $\vp_j:X\multimap X$, $j\in\N\cup\{0\}$, whose compositions $\vp_{n-1}\circ \ldots \circ \vp_0$ replace the role of the $n$-th iterates of $\vp$. Such an entropy will be called \emph{parametric} (whence the title).

Our paper is organized as follows. After some technical preliminaries, various definitions of topological entropy will be considered for arbitrary multivalued nonautonomous maps and, separately, for semicontinuous maps. Then possibly discontinuous hypermaps induced by nonautonomous upper semicontinuous maps will be investigated with respect to the inequality in (ii). For certain subclasses of multivalued upper semicontinuous maps, some further definitions of parametric topological entropy will be still examined for the same goal like those in \cite{CMM}. Some simple illustrative examples and comments will be supplied.

\section{Preliminaries}

The topological spaces under our consideration will be at least compact Hausdorff.

A collection $\A$ of subsets of a topological space space $(X,\tau)$ is a \emph{cover} of $X$ if their union is all of $X$, i.e. $X = \bigcup\A$. A collection $\A$ is an \emph{open cover} of $X$ if $X = \bigcup\A$ and additionally $\A\subset\tau$. If $\B\subset\A$ is also a cover of $X$, then we say that the cover $\A$ has a \emph{subcover} $\B$ of $X$.

A topological space $(X,\tau)$ is \emph{compact} if every open cover of $X$ has a finite subcover. Every compact Hausdorff space is well known to be uniformizable.

More concretely, every compact Hausdorff space is a complete uniform space with respect to the unique uniformity compatible with the topology. Thus, all the properties of uniform spaces can be employed in compact Hausdorff spaces.

Let us therefore recall the basic properties of uniform spaces with their relation to compact Hausdorff spaces.

\begin{definition}
If $\mathcal{P}$ is a family of pseudometrics on a set $S$ and $p$ is a pseudometric on $S$, then we write $p\ll\mathcal{P}$, provided
\begin{equation*}
\forall\ep>0\, \exists p_\ep\in\mathcal{P} \,\exists\delta>0 \,\forall x,y\in S: p_\ep(x,y)<0 \Rightarrow p(x,y)<\ep.
\end{equation*}

\end{definition}

\begin{definition}\label{d:uniform1}
A \emph{uniform structure} (or a \emph{uniformity}) on a nonempty set $S$ is a family $\U$ of pseudometrics on $S$ with the properties:
\begin{enumerate}
	\item[(U1)] If $p_0,p_1\in\U$, then $\max\{p_0,p_1\}\in\U$.
	\item[(U2)] If $p$ is a pseudometric on $S$ such that $p\ll\U$, then $p\in\U$.
	\item[(U3)] If $x,y\in S$, $x\neq y$, then there is $p\in\U$ such that $p(x,y)>0$.
\end{enumerate}
A \emph{uniform space $X$} is a nonempty set $S$ (set of points of $X$) together with a uniformity on $S$.
\end{definition}

Let $\U = \U(X)$ denote the uniformity of $X$ (i.e., the family of pseudometrics). Denoting, for $p\in\U$, $x\in X$ and $r>0$,
\begin{equation*}
B_p(x,r):= \{y\in X: p(x,y)<r\},
\end{equation*}
the collection of sets $B_p(x,r)$, where $p\in\U$ and $x\in X$, $r>0$, is a base of a Hausdorff topology on the set of points of $X$.

The following generalization of the Lebesgue covering lemma will be useful in the sequel.

\begin{lemma}[see {\cite[Theorem 33, and the paragraph before]{Ke}}]\label{l:Lebesgue}
Let $(X,\U)$ be a compact uniform space and $\A$ be an open cover of $X$. Then there exist $p\in\U$, $\delta>0$ such that, for every $x\in X$, there exists $A\in\A$ such that 
\begin{equation*}
B_p(x,\delta) \subset A.
\end{equation*}
\end{lemma}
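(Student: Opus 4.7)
The plan is to imitate the classical proof of the Lebesgue covering lemma, replacing the single metric by a carefully chosen pseudometric from the uniformity, and exploiting the fact that balls of the form $B_p(x,r)$ with $p\in\U$, $x\in X$, $r>0$ form a base of the topology on $X$.

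First I would pick, for each $x\in X$, some $A_x\in\A$ with $x\in A_x$. Since $A_x$ is open and the sets $B_p(y,r)$ form a base of the topology, there exist $p_x\in\U$ and $r_x>0$ such that
\begin{equation*}
B_{p_x}(x,2r_x)\subset A_x.
\end{equation*}
The family $\{B_{p_x}(x,r_x):x\in X\}$ is an open cover of $X$, so by compactness it admits a finite subcover indexed by $x_1,\dots,x_n\in X$. Set
\begin{equation*}
p:=\max\{p_{x_1},\dots,p_{x_n}\}\qquad\text{and}\qquad \delta:=\min\{r_{x_1},\dots,r_{x_n}\}>0.
\end{equation*}
Iterated application of axiom (U1) in Definition \ref{d:uniform1} guarantees that $p\in\U$.

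Next I would verify the conclusion. Given $x\in X$, there is an index $i$ with $x\in B_{p_{x_i}}(x_i,r_{x_i})$, that is $p_{x_i}(x_i,x)<r_{x_i}$. For any $y\in B_p(x,\delta)$ one has $p_{x_i}(x,y)\leq p(x,y)<\delta\leq r_{x_i}$, and the triangle inequality for the pseudometric $p_{x_i}$ yields
\begin{equation*}
p_{x_i}(x_i,y)\leq p_{x_i}(x_i,x)+p_{x_i}(x,y)<r_{x_i}+r_{x_i}=2r_{x_i},
\end{equation*}
so $y\in B_{p_{x_i}}(x_i,2r_{x_i})\subset A_{x_i}$. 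Therefore $B_p(x,\delta)\subset A_{x_i}\in\A$, as required.

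The only conceptual obstacle is not a computational one but making sure the choice of $p_x$, $r_x$ with $B_{p_x}(x,2r_x)\subset A_x$ is legitimate; this follows from the fact that sets of the form $B_p(x,r)$ with $p\in\U$, $r>0$ constitute a neighbourhood base at $x$ in the uniform topology, which is precisely what was stated just before the lemma. Everything else is a direct translation of the metric argument, with $\max$ of finitely many pseudometrics playing the role of the Euclidean metric and axiom (U1) ensuring this maximum remains in the uniformity.
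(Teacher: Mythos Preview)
Your proof is correct and complete; it is the standard Lebesgue-number argument transported to the pseudometric description of uniformities, with axiom (U1) doing exactly the work you claim. The paper itself does not give a proof of this lemma but merely cites Kelley, so there is no in-paper argument to compare against; your write-up is in fact more self-contained than what the paper provides.
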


For more details concerning the uniform spaces, see e.g. \cite{Is,Ke,Pa}.

It will be also convenient to recall some elementary properties of multivalued maps which will be employed in the sequel. Let $\vp:X\multimap Y$ be a multivalued map and $X,Y$ be topological spaces. All multivalued maps will always have nonempty values, i.e. $\vp:X\to 2^X\setminus\{\emptyset\}$. If the space $Y$ is compact and all the values of $\vp$ are closed, then we can write $\vp:X\to\KK(Y)$, where
\begin{equation*}
\KK(Y):= \{A\subset Y: A\mbox{ is nonempty and compact}\}.
\end{equation*}

The regularity of semicontinuous multivalued maps can be defined by means of the preimages of $\vp\colon X\multimap Y$, where
\begin{equation*}
\vp^{-1}_-(B):= \{x\in X:\vp(x)\subset B\} \mbox{ (``small'' preimage of $\vp$ at $B\subset Y$),}
\end{equation*}
resp.
\begin{equation*}
\vp^{-1}_+(B):= \{x\in X:\vp(x)\cap B\neq\emptyset\} \mbox{ (``large'' preimage of $\vp$ at $B\subset Y$).}
\end{equation*}

\begin{definition}
\begin{enumerate}
	\item[(i)] $\vp:X\multimap Y$ is said to be \emph{upper semicontinuous} (u.s.c.) if $\vp^{-1}_-(B)$ is open for every open $B\subset Y$, resp. $\vp^{-1}_+(B)$ is closed for every closed $B\subset Y$;
	\item[(i)] $\vp:X\multimap Y$ is said to be \emph{lower semicontinuous} (l.s.c.) if $\vp^{-1}_+(B)$ is open for every open $B\subset Y$, resp. $\vp^{-1}_-(B)$ is closed for every closed $B\subset Y$;
	\item[(iii)] $\vp:X\multimap Y$ is said to be \emph{continuous} if it is both u.s.c. and l.s.c.
\end{enumerate}
\end{definition}

Obviously, if $\vp:X\to Y$ is single-valued u.s.c. or l.s.c., then it is continuous. 

\begin{lemma}[cf. e.g. {\cite[Proposition I.3.20]{AG}}, {\cite[Corollary 1.2.20]{HP}}]\label{l:2.2}
Let $X,Y$ be Hausdorff topological spaces and $\vp:X\to\KK(Y)$ be u.s.c. multivalued. If $K$ is a compact subset of $X$, then $\vp(K)$ is a compact subset of $Y$, i.e. if $K\in\KK(X)$, then $\bigcup_{x\in K} \vp(x)\in\KK(Y)$.
\end{lemma}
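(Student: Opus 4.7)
The plan is a standard double-compactness argument that uses only the equivalent formulation of upper semicontinuity via the small preimage $\vp^{-1}_-$. Setting $L := \bigcup_{x\in K}\vp(x)$, nonemptiness of $L$ is immediate from $K\neq\emptyset$ and $\vp(x)\neq\emptyset$ for every $x\in K$, so the real task is to verify that $L$ is compact in $Y$.

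I would fix an arbitrary open cover $\mathcal{U}=\{U_\alpha\}_{\alpha\in A}$ of $L$ in $Y$ and produce a finite subcover in two steps. First, for each $x\in K$, the value $\vp(x)\in\KK(Y)$ is compact and covered by $\mathcal{U}$, so there is a finite $F(x)\subset A$ with
\[
\vp(x)\subset V_x := \bigcup_{\alpha\in F(x)} U_\alpha.
\]
Upper semicontinuity then ensures that $W_x := \vp^{-1}_-(V_x)$ is open in $X$ and contains $x$, so $\{W_x : x\in K\}$ is an open cover of $K$. Second, extracting a finite subcover $W_{x_1},\ldots,W_{x_m}$ by compactness of $K$, every $x\in K$ satisfies $\vp(x)\subset V_{x_i}$ for some $i$, hence
\[
L \subset \bigcup_{i=1}^m V_{x_i} = \bigcup_{i=1}^m \bigcup_{\alpha\in F(x_i)} U_\alpha,
\]
which is a finite subfamily of $\mathcal{U}$.

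I do not anticipate any real obstacle; the argument is entirely formal, with the only nontrivial point being the invocation of the characterization that $\vp^{-1}_-(B)$ is open whenever $B\subset Y$ is open, which is precisely what guarantees that the sets $W_x$ cover $K$. The Hausdorff assumption on $X$ and $Y$ plays no essential role in the compactness conclusion itself; it would matter only if one additionally wanted to view $L$ as a closed subset of $Y$, which is then automatic since compact subsets of Hausdorff spaces are closed.
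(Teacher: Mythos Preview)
Your argument is correct and is precisely the standard double-compactness proof of this fact. The paper does not actually supply its own proof of this lemma; it is stated with citations to \cite{AG} and \cite{HP} and used as a known background result, and what you have written is essentially the argument one finds in those references.
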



\begin{prop}[cf. {\cite[Corollary 1.2.22 and Proposition 1.2.33]{HP}}]\label{p:2.1}
A compact map $\vp:X\to\KK(Y)$ (i.e. $\vp(X)$ is contained in a compact subset of $Y$) is u.s.c. multivalued if and only if its graph $\Gamma_\vp:= \{(x,y)\in X\times Y: y\in \vp(x)\}$ is closed.
\end{prop}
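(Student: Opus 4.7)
The statement is a biconditional, so the plan is to prove each implication separately, using the compactness hypothesis in the backward direction.

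For the forward direction, assume $\vp$ is u.s.c.\ and let $(x_0,y_0)\notin\Gamma_\vp$, so that $y_0\notin\vp(x_0)$. Since $\vp(x_0)\in\KK(Y)$ and $Y$ is Hausdorff, I can separate the point $y_0$ from the compact set $\vp(x_0)$ by disjoint open neighborhoods $U\ni y_0$ and $V\supset\vp(x_0)$. By upper semicontinuity the small preimage $\vp^{-1}_-(V)$ is an open neighborhood of $x_0$, and by construction $(\vp^{-1}_-(V)\times U)\cap\Gamma_\vp=\emptyset$. This exhibits an open neighborhood of $(x_0,y_0)$ avoiding $\Gamma_\vp$, so $\Gamma_\vp$ is closed.

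For the backward direction, assume $\Gamma_\vp$ is closed and that $\vp(X)\subset K$ for some compact $K\subset Y$. I want to show that $\vp^{-1}_+(B)$ is closed for every closed $B\subset Y$. Observe that
\begin{equation*}
\vp^{-1}_+(B)=\pi_X\bigl(\Gamma_\vp\cap(X\times B)\bigr)=\pi_X\bigl(\Gamma_\vp\cap(X\times(B\cap K))\bigr),
\end{equation*}
where the second equality holds because $\Gamma_\vp\subset X\times K$. Since $K$ is compact Hausdorff it is closed in $Y$, the intersection $B\cap K$ is a closed subset of $K$, and $\Gamma_\vp\cap(X\times K)$ is closed in $X\times K$. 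Thus $\Gamma_\vp\cap(X\times(B\cap K))$ is closed in $X\times K$.

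The main step, and the one place where the compactness assumption is essential, is to conclude that the projection of this closed set down to $X$ is closed in $X$. This follows from the classical tube lemma / closed-projection theorem: the projection $\pi_X\colon X\times K\to X$ is a closed map because its fiber factor $K$ is compact. Applying this to $\Gamma_\vp\cap(X\times(B\cap K))$ yields that $\vp^{-1}_+(B)$ is closed in $X$, which is exactly upper semicontinuity in the equivalent formulation via large preimages of closed sets. The hypothesis that $\vp$ is compact (rather than merely closed-valued) is what makes this projection argument work; without it the projection $\pi_X\colon X\times Y\to X$ need not be closed, so this is the only genuine subtlety in the argument.
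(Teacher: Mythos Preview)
Your proof is correct. The paper does not supply its own argument for this proposition but simply cites \cite[Corollary 1.2.22 and Proposition 1.2.33]{HP}, so there is no in-paper proof to compare against; your argument is the standard one and would serve perfectly well as a self-contained justification.
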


Furthermore, if $\vp_1:X\multimap Y$ and $\vp_2:Y\multimap Z$ are u.s.c. multivalued (resp. l.s.c., continuous), then so is their composition $\vp_2\circ\vp_1:X\multimap Z$ (see e.g. \cite[Proposition I.2.56]{HP}).

If, in particular, $\vp_1:X\to\KK(Y)$ and $\vp_2:Y\to\KK(Z)$ are u.s.c. multivalued (resp. continuous), then so is $\vp_2\circ\vp_1:X\to\KK(Z)$ (see e.g. \cite[Proposition 3.21]{AG}, \cite[Proposition 1.2.56 and Corollary 1.2.20]{HP}).

If $X$ is a Hausdorff topological space and $Y$ is a metric space, then $\vp:X\to\KK(Y)$ is continuous if and only if it is continuous with respect to the Hausdorff metric $\dH$, where
\begin{equation*}
\dH(A,B):= \max\{\sup_{a\in A}(\inf_{b\in B} d(a,b),\sup_{b\in B}(\inf_{a\in A} d(a,b))\},
\end{equation*}
for $A,B\in\KK(Y)$ (see e.g. \cite[Theorem I.3.64]{AG}, \cite[Corollary 1.2.69]{HP}).

\begin{prop}[\cite{Mi51}]\label{p:hyper2}
Let $(X,\U)$ be a uniform space and $(X,\tau)$ be a Hausdorff topological space, where the topology $\tau$ is induced by the uniformity $\U$, as indicated above. Then $(\KK(X),\tv)$, where $\tv$ stands for a Vietoris topology, is a Hausdorff topological space induced by the uniformity $\U^{\mathrm{H}}$, where
\begin{equation*}
\U^{\mathrm{H}}:= \{\pH: p\in \U\mbox{ is bounded}\}
\end{equation*}
and 
\begin{equation*}
\pH(A,B) := \max \{ \sup_{a\in A} (\inf_{b\in B}p(a,b)), \sup_{b\in B} (\inf_{a\in A} p(a,b))\}, \mbox{ $A,B \in \KK(X)$, $p\in\U$.}
\end{equation*}

\end{prop}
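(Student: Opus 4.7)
The plan is to prove the proposition in two stages: first verify that the family $\U^{\mathrm{H}}$ generates a uniformity on $\KK(X)$ in the sense of Definition~\ref{d:uniform1}, and second identify the topology induced by this uniformity with the Vietoris topology $\tv$.

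For the first stage, I would observe that for each bounded $p\in\U$ the Hausdorff extension $\pH$ is a bounded pseudometric on $\KK(X)$: nonnegativity and symmetry follow directly from the symmetric form of the definition, while the triangle inequality is the classical two-step infimum computation. Closure of the family under $\max$ (axiom (U1)) is immediate from $(\max\{p_0,p_1\})^{\mathrm{H}}\geq\max\{p_0^{\mathrm{H}},p_1^{\mathrm{H}}\}$ together with a reverse $\ll$-domination, and closure under $\ll$ (axiom (U2)) is inherited by restricting to singletons via $x\mapsto\{x\}$, noting that $\pH(\{x\},\{y\})=p(x,y)$. The separation axiom (U3) is the main content here: take $A\neq B$ in $\KK(X)$ and, without loss of generality, some $a\in A\setminus B$. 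Since $X$ is Hausdorff and $B$ is compact, (U3) for $\U$ applied to $a$ and each $b\in B$, combined with (U1) for $\U$ and a finite subcover argument, produces a single bounded $p\in\U$ with $\inf_{b\in B}p(a,b)>0$, whence $\pH(A,B)>0$.

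For the second stage, it suffices to show that at any $K\in\KK(X)$ the $\pH$-balls and the Vietoris basic open neighborhoods
\begin{equation*}
\langle U_1,\ldots,U_n\rangle:=\bigl\{L\in\KK(X):L\subset \bigcup_{i=1}^n U_i\text{ and }L\cap U_i\neq\emptyset\text{ for all }i\bigr\}
\end{equation*}
form mutually refining neighborhood bases. Given such a Vietoris neighborhood of $K$, I would apply Lemma~\ref{l:Lebesgue} to a suitable open cover of $K$ refining $\{U_1,\ldots,U_n\}$, obtaining $p\in\U$ and $\delta>0$ such that $\pH(L,K)<\delta$ forces each point of $L$ to be $p$-close to a point of $K$ (hence to lie inside some $U_i$), while the symmetric half of the Hausdorff bound, applied at preselected points of $K\cap U_i$, forces $L\cap U_i\neq\emptyset$ for every $i$. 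Conversely, for $p\in\U$ bounded and $\ep>0$, compactness of $K$ yields a finite cover by $p$-balls $B_p(x_i,\ep/2)$ with $x_i\in K$, and the Vietoris neighborhood built from $\{B_p(x_i,\ep/2)\}_{i=1}^n$ is seen to lie inside $B_{\pH}(K,\ep)$.

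The main obstacle is the bookkeeping in the Vietoris-to-Hausdorff direction: one must simultaneously secure the cover condition $L\subset\bigcup_i U_i$ and all the intersection conditions $L\cap U_i\neq\emptyset$ from a single $\pH$-bound. This typically requires two independent choices, an outer Lebesgue number ensuring the cover condition and an inner inflation around preselected $K$-points inside each $U_i$ ensuring the intersection conditions, to be amalgamated through (U1) for $\U$ into a common bounded $p\in\U$ and a common $\delta>0$. A minor secondary subtlety, as noted above, is the transfer of the $\ll$-closure axiom (U2) from $\U$ to $\U^{\mathrm{H}}$, which is handled cleanly by evaluating on singletons.
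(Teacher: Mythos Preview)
The paper does not supply a proof of this proposition: it is stated with the attribution \cite{Mi51} and no proof environment follows. The result is quoted from Michael's classical 1951 paper on hyperspace topologies, so there is nothing in the present paper against which to compare your outline.

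As a self-contained argument your sketch is broadly on the right track, but the handling of axiom (U2) is a genuine slip. Axiom (U2) requires that every pseudometric $q$ on $\KK(X)$ with $q\ll\U^{\mathrm{H}}$ already belong to $\U^{\mathrm{H}}$; evaluating on singletons via $\pH(\{x\},\{y\})=p(x,y)$ only constrains the trace of $q$ on the embedded copy of $X$ and says nothing about $q$ on general compacta. The standard resolution is to read $\U^{\mathrm{H}}$ as the uniformity \emph{generated} by the Hausdorff pseudometrics (so that (U1) and (U2) hold by construction) and then to prove that this generated uniformity induces $\tv$, which is precisely what your second stage attempts. A smaller point: in the Vietoris-to-Hausdorff direction your appeal to Lemma~\ref{l:Lebesgue} is made on the compact subspace $K$, but a Lebesgue number for the relative cover $\{U_i\cap K\}$ of $K$ controls only balls intersected with $K$, whereas to force $L\subset\bigcup_i U_i$ you need balls in $X$. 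The fix is routine (for each $k\in K$ choose a $p_k$-ball in $X$ contained in some $U_i$, extract a finite subcover of $K$, and amalgamate via (U1)), but it is not literally Lemma~\ref{l:Lebesgue}.
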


\begin{prop}[cf. e.g. {\cite[Theorem 2.3.5]{KT2}}]\label{p:2.3}
Let $(X,\tau)$ be a topological space. Then $(X,\tau)$ is compact Hausdorff if and only if $(\KK(X),\tv)$ is compact Hausdorff, where $\tv$ stands for a Vietoris topology.
\end{prop}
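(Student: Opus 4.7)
The plan is to prove the two implications separately. First recall that the Vietoris topology $\tv$ on $\KK(X)$ is generated by the subbase consisting of the sets $U^+:=\{K\in\KK(X):K\subset U\}$ and $U^-:=\{K\in\KK(X):K\cap U\neq\emptyset\}$, where $U$ ranges over open subsets of $X$.

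For the backward implication ``$(\KK(X),\tv)$ compact Hausdorff $\Rightarrow$ $(X,\tau)$ compact Hausdorff'', I would work through the singleton embedding $i\colon X\to\KK(X)$, $i(x):=\{x\}$. It is continuous since $i^{-1}(U^+)=i^{-1}(U^-)=U$ for every open $U\subset X$. Hausdorffness of $X$ then follows by pulling back via $i$ any pair of disjoint Vietoris neighborhoods that separate $\{x\}$ and $\{y\}$ for $x\neq y$. For compactness of $X$, given an open cover $\{W_\alpha\}$ of $X$, the family $\{W_\alpha^-\}$ covers $\KK(X)$ (every nonempty $K\in\KK(X)$ meets some $W_\alpha$), whence compactness of $\KK(X)$ furnishes a finite subcover, and passing back through singletons yields a finite subcover of $X$.

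The forward implication is the substantive one. For Hausdorffness of $\KK(X)$, given $A\neq B$ in $\KK(X)$ with, say, $a\in A\setminus B$, I would use Hausdorffness of $X$ together with compactness of $B$ to find disjoint open $U\ni a$ and $V\supset B$; then $U^-$ and $V^+$ are disjoint Vietoris-open sets separating $A$ and $B$.

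Compactness of $\KK(X)$ is the main step, and my plan is to invoke Alexander's subbase theorem on the Vietoris subbase above. Given a subbasic cover, split it into parts $\{U_\alpha^+\}$ and $\{V_\beta^-\}$. If $\bigcup_\beta V_\beta = X$, compactness of $X$ yields finitely many $V_{\beta_1},\ldots,V_{\beta_k}$ covering $X$, and then the corresponding $V_{\beta_i}^-$ cover $\KK(X)$ because every element of $\KK(X)$ is nonempty. Otherwise, $F:=X\setminus\bigcup_\beta V_\beta$ is nonempty and compact, disjoint from every $V_\beta$, hence cannot belong to any $V_\beta^-$, so $F\in U_{\alpha_0}^+$ for some $\alpha_0$; that is, $F\subset U_{\alpha_0}$. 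Then $X\setminus U_{\alpha_0}$ is a compact subset of $\bigcup_\beta V_\beta$, so finitely many $V_{\beta_1},\ldots,V_{\beta_k}$ suffice to cover it. A case split on whether $K\subset U_{\alpha_0}$ shows that $\{U_{\alpha_0}^+,V_{\beta_1}^-,\ldots,V_{\beta_k}^-\}$ already covers $\KK(X)$. The main technical obstacle is precisely this last extraction: one must correctly identify the compact set ($F$, then $X\setminus U_{\alpha_0}$) whose finite cover by $V_{\beta_i}$'s supplies a finite subcover of $\KK(X)$.
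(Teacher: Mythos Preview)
Your argument is correct in all parts: the singleton embedding handles the backward direction cleanly, and the Alexander subbase argument for compactness of $\KK(X)$ is carried out correctly (including the key case split on whether $\bigcup_\beta V_\beta=X$). Note, however, that the paper does not supply its own proof of this proposition; it is merely quoted from the literature (cf.\ \cite[Theorem~2.3.5]{KT2}), so there is no in-paper argument to compare against. Your proof is the standard one and would be perfectly acceptable as a self-contained justification.
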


\begin{remark}
Observe that although the single-valued hypermaps $\vp^*$, where $\vp^*(K):= \bigcup_{x\in K} \vp(x)$ for $K\in\KK(X)$, induced by the u.s.c. multivalued maps $\vp:X\to\KK(Y)$ in compact uniform (Hausdorff) spaces $X$, $Y$, send (in view of Propositions~\ref{p:hyper2} and \ref{p:2.3}) the values from compact Hausdorff spaces $(\KK(X),\tv(\KK(X)))$ into compact Hausdorff spaces $(\KK(Y),\tv(\KK(Y)))$, i.e. $\vp^*:\KK(X)\to\KK(Y)$, they can be discontinuous.
\end{remark}

On the other hand, if the maps $\vp:X\to\KK(Y)$ are continuous, then so are the induced hypermaps $\vp^*:\KK(X)\to\KK(Y)$, by means of the following proposition.

\begin{prop}[cf. {\cite[Proposition 5]{AL1}}]
Let $X=(X,\tau(X))$, $Y=(Y,\tau(Y))$ be compact topological spaces and $\vp:X\to\KK(Y)$ be a continuous multivalued map with compact values. Then the induced (single-valued) hypermap $\vp^*:\KK(X)\to\KK(Y)$, where $\vp^*(K):= \bigcup_{x\in K} \vp(x)$, for $K\in\KK(X)$, is continuous with respect to the Vietoris topologies $\tv(\KK(X))$ and $\tv(\KK(Y))$.

\end{prop}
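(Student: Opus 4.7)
The plan is to verify continuity of $\vp^*$ by checking the preimages of a subbasis of the Vietoris topology $\tv(\KK(Y))$. Recall that $\tv(\KK(Y))$ has a well-known subbasis consisting of the sets
\begin{equation*}
V^+:=\{L\in\KK(Y): L\subset V\}\quad\text{and}\quad V^-:=\{L\in\KK(Y): L\cap V\neq\emptyset\},
\end{equation*}
where $V$ ranges over open subsets of $Y$. Hence it suffices to prove that $(\vp^*)^{-1}(V^+)$ and $(\vp^*)^{-1}(V^-)$ are open in $\tv(\KK(X))$ for every such $V$. First, I would remark that $\vp^*(K)\in\KK(Y)$ is well defined, which is exactly the content of Lemma \ref{l:2.2}, since continuous maps are in particular u.s.c.

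For the subbasic set $V^+$, I would unwind the definition of $\vp^*$: for $K\in\KK(X)$,
\begin{equation*}
\vp^*(K)\subset V \iff \vp(x)\subset V\ \text{for every } x\in K \iff K\subset \vp_-^{-1}(V).
\end{equation*}
Since $\vp$ is continuous, and hence u.s.c., the set $\vp_-^{-1}(V)$ is open in $X$, so
\begin{equation*}
(\vp^*)^{-1}(V^+)=\bigl(\vp_-^{-1}(V)\bigr)^+,
\end{equation*}
which is a subbasic open set in $\tv(\KK(X))$. Analogously, for the subbasic set $V^-$, one computes
\begin{equation*}
\vp^*(K)\cap V\neq\emptyset \iff \exists x\in K:\ \vp(x)\cap V\neq\emptyset \iff K\cap \vp_+^{-1}(V)\neq\emptyset,
\end{equation*}
and since $\vp$ is also l.s.c., $\vp_+^{-1}(V)$ is open in $X$, whence $(\vp^*)^{-1}(V^-)=\bigl(\vp_+^{-1}(V)\bigr)^-$ is again subbasic open.

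I do not anticipate a genuine obstacle here: the argument is essentially a bookkeeping exercise that splits the continuity of $\vp$ into its u.s.c.\ and l.s.c.\ halves and pairs each with the corresponding half of the Vietoris subbasis. The only mildly subtle point is to confirm that the sets $V^\pm$ indeed form a subbasis of $\tv$ (this is the standard definition of the Vietoris topology and is implicit in Proposition \ref{p:hyper2}); once this is granted, openness of the two preimages is immediate from the identities above, and the continuity of $\vp^*$ between the compact Hausdorff spaces $(\KK(X),\tv)$ and $(\KK(Y),\tv)$ (cf.\ Proposition \ref{p:2.3}) follows.
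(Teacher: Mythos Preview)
Your argument is correct and is the standard Vietoris-subbasis verification: pairing the u.s.c.\ half of $\vp$ with the $V^+$ subbasic sets and the l.s.c.\ half with the $V^-$ sets yields exactly the identities $(\vp^*)^{-1}(V^+)=\bigl(\vp_-^{-1}(V)\bigr)^+$ and $(\vp^*)^{-1}(V^-)=\bigl(\vp_+^{-1}(V)\bigr)^-$. The paper does not supply its own proof here but simply cites \cite[Proposition~5]{AL1}; your approach is precisely the natural one and almost certainly coincides with the cited argument.
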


For compact topological spaces $X=Y$, we can consider the sets of \emph{$n$-orbits} of $\vp$, namely
\begin{align*}
&\orb{1}(\vp):=X,\\
&\orbn(\vp):=\{(x_0,x_1,\ldots,x_{n-1})\in X^n: x_{i+1}\in\vp(x_i),i=0,1,\ldots,n-2\}, n\geq 2.
\end{align*}

For a sequence of multivalued maps $\vp_{0,\infty}:=\{\vp_j\}_{j=0}^{\infty}$, we can define the \emph{$n$-orbits} of $\vp_{0,\infty}$ as
\begin{align*}
&\orb{1}(\vp_{0,\infty}):=X,\\
&\orbn(\vp_{0,\infty}):= \{(x_0,x_1,\ldots,x_{n-1})\in X^n: x_{i+1}\in\vp_i(x_i),i=0,1,\ldots,n-2\}, n\geq 2,
\end{align*}
and in particular
\begin{align*}
\orbn(\vp_{0,\infty},x):= \{(x_0,x_1,\ldots,x_{n-1})\in\orbn(\vp_{0,\infty}),x_0=x\}, n\in\N,
\end{align*}
as those starting at $x\in X$.

%
%
%

\section{Topological entropy for arbitrary maps}\label{s:3}
For the correctness of some definitions of parametric topological entropy for arbitrary maps in compact Hausdorff spaces, it will be convenient to recall the one in \cite{AL4} (cf. the particular cases in \cite{KT,RT,Sh}), denoted by $\hKT$.

\begin{definition}\label{d:3.1}
Let $X$ be a nonempty set, $p$ a pseudometric on $X$ and $\ep>0$. A set $S\subset X$ is called \emph{$(p,\ep)$-separated} if $p(x,y) > \ep$, for every pair of distinct points $x,y\in S$. A set $R\subset X$ is called \emph{$(p,\ep)$-spanning in $Y\subset X$} if, for every $y\in Y$, there is $x\in R$ such that $p(x,y)\leq \ep$.  
\end{definition}

\begin{definition}\label{d:3.2}
Let $X$ be a nonempty set and $\vp_{0,\infty}$ be a sequence of multivalued maps $\vp_j:X\multimap X$, $j\in\N\cup\{0\}$. Let $p$ be a pseudometric on $X$, $\ep>0$ and $n\in\N$. Let $p_n$ be a pseudometric on $X^n$ defined as
\begin{align}\label{eq:0}
p_n((x_0,\ldots,x_{n-1}),(y_0,\ldots,y_{n-1})) := \max \{ p(x_i,y_i): i=0,\ldots,n-1\}.
\end{align}
We call $S\subset \orbn(\vp_{0,\infty})$ a \emph{$(p,\ep,n)_{\KT}$-separated set for $\vp_{0,\infty}$} if it is a $(p_n,\ep)$-separated subset. We call $R\subset \orbn(\vp_{0,\infty})$ a \emph{$(p,\ep,n)_{\KT}$-spanning set for $\vp_{0,\infty}$} if it is a $(p_n,\ep)$-spanning subset in $\orbn(\vp_{0,\infty})$.
\end{definition}

\begin{definition}\label{d:hsepspan}
Let $(X,\U)$ be a compact uniform space and $\vp_{0,\infty}$ be a sequence of multivalued maps $\vp_j:X\multimap X$, $j\in\N\cup\{0\}$. Denoting by $s_{\KT}(\vp_{0,\infty},p,\ep,n)$ the largest cardinality of a $(p,\ep,n)_{\KT}$-separated set for $\vp_{0,\infty}$, we take
\begin{equation*}
\hKT^{\sepp}(\vp_{0,\infty},p,\ep) := \limsup_{n\to\infty} \frac1{n}\log s_{\KT}(\vp_{0,\infty},p,\ep,n).
\end{equation*}
The \emph{topological entropy} $\hKT^{\sepp}(\vp_{0,\infty})$ of $\vp_{0,\infty}$ is defined to be
\begin{equation*}
\hKT^{\sepp}(\vp_{0,\infty}):= \sup_{p\in\U,\ep>0} \hKT^{\sepp}(\vp_{0,\infty},p,\ep).
\end{equation*}

Denoting by $r_{\KT}(\vp_{0,\infty},p,\ep,n)$ the smallest cardinality of a $(p,\ep,n)_{\KT}$-spanning set for $\vp_{0,\infty}$, we take
\begin{equation*}
\hKT^{\spa}(\vp_{0,\infty},p,\ep) := \limsup_{n\to\infty} \frac1{n}\log r_{\KT}(\vp_{0,\infty},p,\ep,n).
\end{equation*}
The \emph{topological entropy} $\hKT^{\spa}(\vp_{0,\infty})$ of $\vp_{0,\infty}$ is defined to be
\begin{equation*}
\hKT^{\spa}(\vp_{0,\infty}):= \sup_{p\in\U,\ep>0} \hKT^{\spa}(\vp_{0,\infty},p,\ep).
\end{equation*}
\end{definition}

\begin{remark}
The existence of the largest cardinality of a $(p,\ep,n)$-separated set for $\vp_{0,\infty}$ is, in view of Definition~\ref{d:hsepspan}, equivalent to the existence of the largest cardinality of a $(p_n,\ep)$-separated subset of $\orbn(\vp_{0,\infty})$. In \cite{AL4}, we have shown that these maximal cardinalities exist and both cardinalities $s_{\KT}(\vp_{0,\infty},p,\ep,n)$, $r_{\KT}(\vp_{0,\infty},p,\ep,n)$ are finite which guarantees the correctness of Definition~\ref{d:hsepspan}.
\end{remark}

Because of the equality $\hKT^{\sepp}(\vp_{0,\infty})=\hKT^{\spa}(\vp_{0,\infty})$ (see \cite[Theorem 3.1]{AL4}), we can simplify Definition~\ref{d:hsepspan} into the following form.

\begin{definition}\label{d:hKT}
Let $(X,\U)$ be a compact uniform space and $\vp_j: X\multimap X$, $j\in\N\cup\{0\}$, be a sequence $\vp_{0,\infty}$ of multivalued  maps. Then
\begin{equation*}
\hKT(\vp_{0,\infty}):= \hKT^{\sepp}(\vp_{0,\infty})=\hKT^{\spa}(\vp_{0,\infty}).
\end{equation*}
\end{definition}

\begin{lemma}\label{l:3.2}
Let $(X,\U)$ be a compact uniform space, $\vp_{0,\infty}$ be a sequence of arbitrary multivalued maps $\vp_j:X\multimap X$, $j\in\N\cup\{0\}$, and $\psi_{0,\infty}$ be a sequence of selections $\psi_j\subset \vp_j$ of $\vp_j$, i.e. $\psi_j(x)\subset\vp_j(x)$, $j\in\N\cup\{0\}$, for every $x\in X$. Then
\[
\hKT(\psi_{0,\infty}) \leq \hKT(\vp_{0,\infty}).
\]
\end{lemma}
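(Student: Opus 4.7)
The plan is to exploit the simple observation that, by the definition of $n$-orbits, the selection property $\psi_j(x) \subset \vp_j(x)$ for all $j$ and $x$ is inherited orbit-wise.

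\medskip

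First I would check the key inclusion: for every $n \in \N$,
\begin{equation*}
\orbn(\psi_{0,\infty}) \subset \orbn(\vp_{0,\infty}).
\end{equation*}
This is immediate: if $(x_0, \dots, x_{n-1}) \in \orbn(\psi_{0,\infty})$, then $x_{i+1} \in \psi_i(x_i) \subset \vp_i(x_i)$ for $i = 0, \dots, n-2$, so the tuple lies in $\orbn(\vp_{0,\infty})$ as well.

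\medskip

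Next, fix $p \in \U$, $\ep > 0$ and $n \in \N$, and let $p_n$ be the pseudometric on $X^n$ from \eqref{eq:0}. Given any $(p, \ep, n)_{\KT}$-separated set $S$ for $\psi_{0,\infty}$, the set $S$ is by Definition~\ref{d:3.2} a $(p_n, \ep)$-separated subset of $\orbn(\psi_{0,\infty})$. By the inclusion above, $S$ is then a $(p_n, \ep)$-separated subset of $\orbn(\vp_{0,\infty})$, hence a $(p, \ep, n)_{\KT}$-separated set for $\vp_{0,\infty}$. Consequently
\begin{equation*}
s_{\KT}(\psi_{0,\infty}, p, \ep, n) \leq s_{\KT}(\vp_{0,\infty}, p, \ep, n).
\end{equation*}

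\medskip

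Finally I would take $\log$, divide by $n$, and pass to $\limsup_{n \to \infty}$ to obtain $\hKT^{\sepp}(\psi_{0,\infty}, p, \ep) \leq \hKT^{\sepp}(\vp_{0,\infty}, p, \ep)$, and then take the supremum over $p \in \U$ and $\ep > 0$ to get $\hKT^{\sepp}(\psi_{0,\infty}) \leq \hKT^{\sepp}(\vp_{0,\infty})$. In view of Definition~\ref{d:hKT}, which identifies $\hKT$ with $\hKT^{\sepp}$ (via \cite[Theorem 3.1]{AL4}), this yields the claim.

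\medskip

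There is essentially no serious obstacle here: the whole argument rides on the orbit inclusion, which is forced by the pointwise selection property, and the remaining steps are purely monotonic manipulations of $\limsup$ and $\sup$. The only thing to be slightly careful about is that separated sets are being measured in the correct ambient space $\orbn(\vp_{0,\infty})$ — but this is precisely what the orbit inclusion guarantees.
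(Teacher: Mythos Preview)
Your argument is correct: the orbit inclusion $\orbn(\psi_{0,\infty})\subset\orbn(\vp_{0,\infty})$ immediately gives $s_{\KT}(\psi_{0,\infty},p,\ep,n)\leq s_{\KT}(\vp_{0,\infty},p,\ep,n)$, and the rest is routine. The paper does not actually supply a proof here but merely cites \cite[Theorem~3.2 and Remark~3.5]{AL4}; your direct argument is precisely the natural one and almost certainly coincides with what is done in that reference.
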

\begin{proof}
For the proof, see \cite[Theorem 3.2 and Remark 3.5]{AL4}.
\end{proof}
The following definitions of topological entropy $\hCM^{\sepp}$ and $\hCM^{\spa}$ generalize in two directions those in \cite{CMM}, where autonomous multivalued maps were considered in compact metric spaces.

\begin{definition}\label{d:3.5}
Let $(X,\U)$ be a compact uniform space and $\vp_j:X\multimap X$, $j\in\N\cup\{0\}$, be a sequence $\vp_{0,\infty}$ of multivalued maps. For $p\in\U$, $n\in\N$, we define the pseudometric
\begin{align*}
p_n^{\CM}(x,y) := \inf_{\bar{x}\in\orbn(\vp_{0,\infty},x),\bar{y}\in\orbn(\vp_{0,\infty},y)} \max_{0\leq i \leq n-1} p(\bar{x}_i,\bar{y}_i), \quad x,y\in X,
\end{align*}
where $\bar{x}=(\bar{x}_0,\ldots,\bar{x}_{n-1})$, $\bar{y}=(\bar{y}_0,\ldots,\bar{y}_{n-1})$.

We call $S\subset X$ a \emph{$(p,\ep,n)_{\CM}$-separated set for $\vp_{0,\infty}$} if it is a $(p_n^{\CM},\ep)$-separated subset. We call $R\subset X$ a \emph{$(p,\ep,n)_{\CM}$-spanning set for $\vp_{0,\infty}$} if it is a $(p_n^{\CM},\ep)$-spanning subset in $X$.
\end{definition}

\begin{definition}\label{d:hCMsep}
Let $(X,\U)$ be a compact uniform space and $\vp_j: X\multimap X$, $j\in\N\cup\{0\}$, be a sequence $\vp_{0,\infty}$ of multivalued  maps. Denoting by $s_{\CM}(\vp_{0,\infty},p,\ep,n)$ the largest cardinality of a $(p,\ep,n)_{\CM}$-separated set for $\vp_{0,\infty}$, we take
\begin{equation*}
\hCM^{\sepp}(\vp_{0,\infty},p,\ep) := \limsup_{n\to\infty} \frac1{n}\log s_{\CM}(\vp_{0,\infty},p,\ep,n).
\end{equation*}
The \emph{topological entropy} $\hCM^{\sepp}(\vp_{0,\infty})$ of $\vp_{0,\infty}$ is defined to be
\begin{equation*}
\hCM^{\sepp}(\vp_{0,\infty}):= \sup_{p\in\U,\ep>0} \hCM^{\sepp}(\vp_{0,\infty},p,\ep).
\end{equation*}
\end{definition}

%
%

\begin{prop}\label{p:3.1}
Let $(X,\U)$ be a compact uniform space and $\vp_j:X\multimap X$, $j\in\N\cup\{0\}$, be a sequence $\vp_{0,\infty}$ of multivalued maps. Then the inequality
\[
\hCM^{\sepp}(\vp_{0,\infty}) \leq \hKT(\vp_{0,\infty})
\]
holds for the parametric topological entropies $\hKT(\vp_{0,\infty})$ and $\hCM^{\sepp}(\vp_{0,\infty})$ in the sense of Definitions~\ref{d:hsepspan} and~\ref{d:hCMsep}.
\end{prop}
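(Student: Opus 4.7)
The plan is to show the stronger inequality $s_{\CM}(\vp_{0,\infty},p,\ep,n) \leq s_{\KT}(\vp_{0,\infty},p,\ep,n)$ at every level, and then conclude by applying Definition~\ref{d:hKT}. The idea is to lift each point of a $(p,\ep,n)_{\CM}$-separated set in $X$ to a single orbit in $\orbn(\vp_{0,\infty})$ and check that the resulting family is $(p,\ep,n)_{\KT}$-separated. This reflects precisely the fact that $p_n^{\CM}$ is the pointwise infimum of $p_n$ over all admissible lifts.

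First, I fix $p\in\U$, $\ep>0$, $n\in\N$, and let $S\subset X$ be a $(p,\ep,n)_{\CM}$-separated set. Since every $\vp_j$ has nonempty values, for each $x\in X$ we can build at least one $n$-orbit starting at $x$ by iteratively choosing $x_{i+1}\in\vp_i(x_i)$, so $\orbn(\vp_{0,\infty},x)\neq\emptyset$. Choosing (via AC if needed) one such orbit $\bar{x}(x)\in\orbn(\vp_{0,\infty},x)$ for every $x\in S$, define
\begin{equation*}
\tilde S := \{\bar{x}(x): x\in S\}\subset \orbn(\vp_{0,\infty}).
\end{equation*}
Since the $0$-th coordinate of $\bar{x}(x)$ is $x$, the assignment $x\mapsto\bar{x}(x)$ is injective, so $\card(\tilde S)=\card(S)$. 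For any two distinct $x,y\in S$, Definition~\ref{d:3.5} and the $(p,\ep,n)_{\CM}$-separation hypothesis give
\begin{equation*}
p_n(\bar{x}(x),\bar{x}(y)) = \max_{0\le i\le n-1} p(\bar{x}(x)_i,\bar{x}(y)_i) \geq p_n^{\CM}(x,y) > \ep,
\end{equation*}
so $\tilde S$ is $(p_n,\ep)$-separated, i.e.\ a $(p,\ep,n)_{\KT}$-separated set for $\vp_{0,\infty}$ in the sense of Definition~\ref{d:3.2}. Taking the largest cardinality on both sides yields $s_{\CM}(\vp_{0,\infty},p,\ep,n)\leq s_{\KT}(\vp_{0,\infty},p,\ep,n)$.

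Second, applying $\frac{1}{n}\log(\cdot)$, passing to $\limsup_{n\to\infty}$, and then taking the supremum over $p\in\U$ and $\ep>0$ preserves the inequality, which gives $\hCM^{\sepp}(\vp_{0,\infty})\leq \hKT^{\sepp}(\vp_{0,\infty})$. By Definition~\ref{d:hKT}, $\hKT^{\sepp}(\vp_{0,\infty})=\hKT(\vp_{0,\infty})$, and the assertion follows. There is no real obstacle here: the only substantive point is the \emph{lifting step}, where one must observe that replacing the infimum-distance $p_n^{\CM}(x,y)$ by the $p_n$-distance of any particular pair of chosen lifts can only increase the value, so separation is preserved. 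Continuity or semicontinuity of $\vp_j$ is not needed, which is why the estimate holds for arbitrary multivalued maps.
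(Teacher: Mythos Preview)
Your proof is correct and follows essentially the same approach as the paper's: lift each point of a $(p,\ep,n)_{\CM}$-separated set to an arbitrary $n$-orbit and use $p_n\geq p_n^{\CM}$ to conclude that the lifted set is $(p,\ep,n)_{\KT}$-separated with the same cardinality. You include a few extra details (nonemptiness of $\orbn(\vp_{0,\infty},x)$, injectivity of the lift, the explicit passage to $\limsup$ and suprema) that the paper leaves implicit, but the argument is the same.
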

\begin{proof}
Let $p\in\U$, $\ep>0$, $n\in\N$ and $S\subset X$ be a $(p,\ep,n)_{\CM}$-separated set for $\vp_{0,\infty}$. We define $\bar{S}\subset\orbn(\vp_{0,\infty})$ as a set of arbitrary extensions $\bar{x}\in\orbn(\vp_{0,\infty},x)$ of the elements $x\in S$, i.e. $\bar{S}=\{\bar{x}:x\in S\}$.

We show that $\bar{S}$ is a $(p,\ep,n)_{\KT}$-separated set for $\vp_{0,\infty}$. Consider $\bar{x},\bar{y}\in\bar{S}$. Then
\[
\max_{0\leq i \leq n-1} p(\bar{x}_i,\bar{y}_i)\geq p_n^{\CM}(\bar{x}_0,\bar{y}_0) = p_n^{\CM}(x,y).
\]

Since $\card S = \card \bar{S}$, we get $s_{\CM}(\vp_{0,\infty},p,\ep,n)\leq s_{\KT}(\vp_{0,\infty},p,\ep,n)$, for $\vp_{0,\infty}$, and so $\hCM^{\sepp}(\vp_{0,\infty})\leq \hKT^{\sepp}(\vp_{0,\infty})$.
\end{proof}

\begin{remark}
Definition~\ref{d:hCMsep} generalizes its analog in \cite[Definition 2.1]{CMM}, where $X$ was a compact metric space and $\vp_j=\vp$, for $j\in\N\cup\{0\}$.
\end{remark}

\begin{remark}
One can readily check from the proof of Proposition~\ref{p:3.1} that the number of cardinalities of $(p,\ep,n)_{\CM}$-separated sets for $\vp_{0,\infty}$ attains in Definition~\ref{d:hCMsep} really its finite maximum, which justifies the correctness of Definition~\ref{d:hCMsep}. This slightly improves the analogous particular definition in \cite[Definition 2.1]{CMM}, where suprema can be therefore replaced without any loss of generality by maxima. In \cite[Definition 3.3]{Ci}, maxima were correctly taken into account (in a single-valued context), but without any proof of correctness.
\end{remark}

\begin{definition}\label{d:hCMspa}
Let $(X,\U)$ be a compact uniform space and $\vp_{0,\infty}$ be a sequence of multivalued  maps $\vp_j: X\multimap X$, $j\in\N\cup\{0\}$. Denoting by $r_{\CM}(\vp_{0,\infty},p,\ep,n)$ the smallest cardinality of a $(p,\ep,n)_{\CM}$-spanning set for $\vp_{0,\infty}$, we take
\begin{equation*}
\hCM^{\spa}(\vp_{0,\infty},p,\ep) := \limsup_{n\to\infty} \frac1{n}\log r_{\CM}(\vp_{0,\infty},p,\ep,n).
\end{equation*}
The \emph{topological entropy} $\hCM^{\spa}(\vp_{0,\infty})$ of $\vp_{0,\infty}$ is defined to be
\begin{equation*}
\hCM^{\spa}(\vp_{0,\infty}):= \sup_{p\in\U,\ep>0} \hCM^{\spa}(\vp_{0,\infty},p,\ep).
\end{equation*}

\end{definition}

\begin{lemma}\label{l:hCMspasep}
Let $(X,\U)$ be a compact uniform space and $\vp_{0,\infty}$ be a sequence of multivalued  maps $\vp_j: X\multimap X$, $j\in\N\cup\{0\}$. Then the inequality
\[
\hCM^{\spa}(\vp_{0,\infty})\leq \hCM^{\sepp}(\vp_{0,\infty})
\]
holds for the parametric topological entropies $\hCM^{\spa}(\vp_{0,\infty})$ and $\hCM^{\sepp}(\vp_{0,\infty})$ of $\vp_{0,\infty}$, in the sense of Definitions~\ref{d:hCMsep} and~\ref{d:hCMspa}.
\end{lemma}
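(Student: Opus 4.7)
The plan is to establish the standard scale-by-scale comparison $r_{\CM}(\vp_{0,\infty},p,\ep,n)\leq s_{\CM}(\vp_{0,\infty},p,\ep,n)$ for every admissible triple $(p,\ep,n)$, and then pass to the limsup and supremum. The classical observation behind it is that any maximal $(p,\ep,n)_{\CM}$-separated set is automatically $(p,\ep,n)_{\CM}$-spanning: otherwise, a point that is $p_n^{\CM}$-far from every element of the set could be adjoined without destroying separation, contradicting maximality.

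More concretely, I fix $p\in\U$, $\ep>0$, $n\in\N$. By the remark following Proposition~\ref{p:3.1}, the value $s_{\CM}(\vp_{0,\infty},p,\ep,n)$ is attained, so I pick a $(p,\ep,n)_{\CM}$-separated set $S\subset X$ of maximal cardinality $s_{\CM}(\vp_{0,\infty},p,\ep,n)$. For every $y\in X\setminus S$, maximality of $S$ forces the existence of some $x\in S$ with $p_n^{\CM}(x,y)\leq\ep$ (otherwise $S\cup\{y\}$ would still satisfy the separation condition, since Definition~\ref{d:3.1} only asks $p_n^{\CM}(x,y)>\ep$ for distinct pairs). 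For $y\in S$ the spanning condition is trivial from $p_n^{\CM}(y,y)=0\leq\ep$. Hence $S$ is a $(p,\ep,n)_{\CM}$-spanning set in $X$, and consequently
\begin{equation*}
r_{\CM}(\vp_{0,\infty},p,\ep,n)\leq \card S = s_{\CM}(\vp_{0,\infty},p,\ep,n).
\end{equation*}

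Applying $\frac{1}{n}\log(\cdot)$ and taking $\limsup_{n\to\infty}$ preserves this inequality and gives $\hCM^{\spa}(\vp_{0,\infty},p,\ep)\leq \hCM^{\sepp}(\vp_{0,\infty},p,\ep)$. Passing to the supremum over $p\in\U$ and $\ep>0$ yields the claim.

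The only mild subtlety is that $p_n^{\CM}$ is a pseudometric rather than a metric, so distinct elements of $S$ may be $p_n^{\CM}$-indistinguishable; but this never obstructs the argument, since the separation condition involves only pairs of \emph{distinct} points, while the spanning condition only asks for the non-strict inequality $p_n^{\CM}(x,y)\leq\ep$. Thus I expect no real obstacle beyond being careful with these definitions and with the justification that a maximal separated set indeed exists (which has already been handled in the cited remark).
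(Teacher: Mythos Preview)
Your proof is correct and follows essentially the same route as the paper: take a maximal $(p,\ep,n)_{\CM}$-separated set, observe that by maximality it must be $(p,\ep,n)_{\CM}$-spanning, deduce $r_{\CM}\leq s_{\CM}$, and pass to the limit. Your additional remarks about the pseudometric subtlety and the case $y\in S$ are harmless extra care but not needed for the argument.
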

\begin{proof}
Let $p\in\U$, $\ep>0$, $n\in\N\cup\{0\}$. We will prove that $r_{\CM}(\vp_{0,\infty},p,\ep,n)\leq s_{\CM}(\vp_{0,\infty},p,\ep,n)$. Let $S\subset X$ be a $(p,\ep,n)_{\CM}$-separated set for $\vp_{0,\infty}$ with the maximal cardinality, i.e. $\card S = s_{\CM}(\vp_{0,\infty},p,\ep,n)$.

It suffices to show that $S$ is a $(p,\ep,n)_{\CM}$-spanning set in $X$ for $\vp_{0,\infty}$. If not, then there exists $y\in X$ such that $p_n^{\CM}(x,y)>\ep$, for every $x\in S$. Thus, $S\cup\{y\}$ is a $(p,\ep,n)_{\CM}$-separated set for $\vp_{0,\infty}$, which contradicts the assumption of a maximal cardinality.
\end{proof}

\begin{remark}\label{r:3.4}
The existence of a $(p,\ep,n)_{\CM}$-separated subset of $X$ for $\vp_{0,\infty}$ with the finite cardinality in Definition~\ref{d:hCMspa} follows directly from the arguments of the proof of Lemma~\ref{l:hCMspasep}. It justifies the correctness of Definition~\ref{d:hCMspa}.
\end{remark}

\begin{remark}
Definition~\ref{d:hCMspa} generalizes its analog in \cite[Definition 2.2]{CMM}, where $X$ was a compact metric space and $\vp_j=\vp$, for $j\in\N\cup\{0\}$.
\end{remark}

\begin{lemma}\label{l:hCMsel}
Let $X$ be a compact Hausdorff space, $\vp_{0,\infty}$ be a sequence of arbitrary multivalued maps $\vp_j:X\multimap X$, $j\in\N\cup\{0\}$, and $\psi_{0,\infty}$ be a sequence of selections $\psi_j\subset\vp_j$ of $\vp_j$, i.e. $\psi_j(x)\subset \vp_j(x)$, $j\in\N\cup\{0\}$, for every $x\in X$. Then  $\hCM^{\sepp}(\vp_{0,\infty})\leq\hCM^{\sepp}(\psi_{0,\infty})$ and $\hCM^{\spa}(\vp_{0,\infty})\leq\hCM^{\spa}(\psi_{0,\infty})$.
\end{lemma}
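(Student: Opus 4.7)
The heart of the argument is a simple monotonicity observation about the pseudometric $p_n^{\CM}$ that makes the direction of both inequalities reverse the one in Lemma~\ref{l:3.2}. Since $\psi_j(x) \subset \vp_j(x)$ for every $j\in\N\cup\{0\}$ and every $x\in X$, a direct induction on the orbit length yields the inclusion
\begin{equation*}
\orbn(\psi_{0,\infty}, x) \subset \orbn(\vp_{0,\infty}, x), \qquad x\in X,\ n\in\N.
\end{equation*}
Consequently, in the defining formula of $p_n^{\CM}$ the infimum for $\psi_{0,\infty}$ is taken over a (nonempty, since $\psi_j$ has nonempty values) subset of the index set used for $\vp_{0,\infty}$, and therefore
\begin{equation*}
p_n^{\CM,\psi}(x,y) \;\geq\; p_n^{\CM,\vp}(x,y) \qquad \text{for all } x,y\in X.
\end{equation*}

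From here both claims fall out. First, for separated sets: if $S\subset X$ is $(p,\ep,n)_{\CM}$-separated for $\vp_{0,\infty}$, then for any distinct $x,y\in S$ one has $p_n^{\CM,\vp}(x,y) > \ep$, and the inequality above immediately gives $p_n^{\CM,\psi}(x,y) > \ep$, so the same set $S$ is $(p,\ep,n)_{\CM}$-separated for $\psi_{0,\infty}$. This yields
\begin{equation*}
s_{\CM}(\vp_{0,\infty},p,\ep,n) \leq s_{\CM}(\psi_{0,\infty},p,\ep,n).
\end{equation*}
Second, for spanning sets: if $R\subset X$ is $(p,\ep,n)_{\CM}$-spanning for $\psi_{0,\infty}$, then for every $y\in X$ there exists $x\in R$ with $p_n^{\CM,\psi}(x,y) \leq \ep$, hence $p_n^{\CM,\vp}(x,y)\leq\ep$ as well, so $R$ spans for $\vp_{0,\infty}$, giving
\begin{equation*}
r_{\CM}(\vp_{0,\infty},p,\ep,n) \leq r_{\CM}(\psi_{0,\infty},p,\ep,n).
\end{equation*}

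Passing to $\limsup_{n\to\infty}\tfrac1n\log(\cdot)$ preserves both inequalities, and taking the supremum over $p\in\U$ and $\ep>0$ on each side yields $\hCM^{\sepp}(\vp_{0,\infty})\leq\hCM^{\sepp}(\psi_{0,\infty})$ and $\hCM^{\spa}(\vp_{0,\infty})\leq\hCM^{\spa}(\psi_{0,\infty})$, respectively. There is no real obstacle here beyond keeping track of the direction: the infimum in the definition of $p_n^{\CM}$ means that restricting the set of admissible orbits enlarges the pseudometric, which \emph{increases} the count of separated and spanning sets simultaneously, thereby reversing the sense of the selection-monotonicity of $\hKT$ recalled in Lemma~\ref{l:3.2}.
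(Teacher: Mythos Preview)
Your proof is correct and follows essentially the same approach as the paper: both arguments rest on the inclusion $\orbn(\psi_{0,\infty},x)\subset\orbn(\vp_{0,\infty},x)$ and its consequence that a $(p,\ep,n)_{\CM}$-separated set for $\vp_{0,\infty}$ is also one for $\psi_{0,\infty}$, and dually for spanning sets. Your version makes the monotonicity $p_n^{\CM,\psi}\geq p_n^{\CM,\vp}$ explicit, which is a slightly cleaner way to package the same idea.
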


\begin{proof}
We can proceed in an analogous way as in the particular case in \cite[Theorem 3.2]{CMM}.

Let $p\in\U$, $\ep>0$, $n\in\N$. Assume $S\subset X$ is a $(p,\ep,n)_{\CM}$-separated set for $\vp_{0,\infty}$.

Take $x,y\in S$ and their arbitrary extensions $\bar{x}\in\orbn(\psi_{0,\infty},x)$, $\bar{y}\in\orbn(\psi_{0,\infty},y)$. Thanks to the inclusion $\orbn(\psi_{0,\infty},x)\subset\orbn(\vp_{0,\infty},x)$, $S$ is also a $(p,\ep,n)_{\CM}$-separated for $\psi_{\CM}$. Thus,
\[
s_{\CM}(\vp_{0,\infty},p,\ep,n)\leq s_{\CM}(\psi_{0,\infty},p,\ep,n),
\]
and subsequently $\hCM(\vp_{0,\infty})\leq \hCM(\psi_{0,\infty})$, as claimed.

Now, let $R\subset X$ be a $(p,\ep,n)_{\CM}$-spanning set for $\psi_{0,\infty}$. By the similar arguments as above, we can see that $R$ is a $(p,\ep,n)_{\CM}$-spanning set for $\vp_{0,\infty}$. Hence,
\[
r_{\CM}(\vp_{0,\infty},p,\ep,n)\leq r_{\CM}(\psi_{0,\infty},p,\ep,n),
\]
which, after accomplishing the limit processes from Definition~\ref{d:hCMspa}, completes the proof.
\end{proof}

For a particular case of u.s.c. multivalued maps with compact values, the following two definitions were already introduced in \cite{AL2}.

\begin{definition}\label{d:hrse}
Let $(X,\U)$ be a compact uniform space and $\vp_{0,\infty}$ be a sequence of multivalued  maps $\vp_j: X\multimap X$, $j\in\N\cup\{0\}$. A set $S\subset X$ is called \emph{$(p,\ep,n)_\rho$-separated} for $\vp_{0,\infty}$, for a positive integer $n\in\N$, $\ep>0$ and $p\in\U$, if for every pair of distinct points $x,y\in S$, $x\neq y$, there is at least one $k$ with $0\leq k < n$ such that
\[
p^\rho(\vp^{[k]}(x),\vp^{[k]}(y)) > \ep,
\]
where
\begin{align*}
p^\rho(A,B) &:= \inf \{ p(a,b): a\in A, b\in B\}, \mbox{ $A,B \subset X$,}\\
\vp^{[k]}&:= \vp_{k-1} \circ \ldots \circ \vp_0, \mbox{ for $k>0$, and } \vp^{[0]}:= \id_X.
\end{align*}

Let $s_{\rho}(\vp_{0,\infty},p,\ep,n)$ denote the maximal cardinality of a $(p,\ep,n)_{\rho}$-separated subset of $X$ for $\vp_{0,\infty}$, i.e.
\[
s_{\rho}(\vp_{0,\infty},p,\ep,n):= \max \{ \card S: \mbox{ $S\subset X$ is a $(p,\ep,n)_{\rho}$-separated set for $\vp$}\}.
\]

Then the \emph{topological entropy} $\hrse(\vp_{0,\infty})$ of $\vp_{0,\infty}$ is defined as
\begin{align*}\label{e:hrse}
\hrse(\vp_{0,\infty})&:= \sup_{p\in \U,\ep>0} s_{\rho}(\vp_{0,\infty},p,\ep),\\ 
\mbox{ where } s(\vp_{0,\infty},p,\ep)_{\rho}&:= \limsup_{n\to\infty} \frac1n \log s_{\rho}(\vp_{0,\infty},p,\ep,n).\nonumber
\end{align*}
\end{definition}

\begin{definition}\label{d:hrsp}
Let $(X,\U)$ be a compact uniform space and $\vp_{0,\infty}$ be a sequence of multivalued  maps $\vp_j: X\multimap X$, $j\in\N\cup\{0\}$. A set $R\subset X$ is called \emph{$(p,\ep,n)_{\rho}$-spanning} for $\vp_{0,\infty}$, for a positive integer $n\in\N$, $\ep>0$ and $p\in\U$, if for every $x\in X$ there is $y\in R$ such that (cf. Definition~\ref{d:hrse})
\[
p^\rho(\vp^{[k]}(x),\vp^{[k]}(y))\leq \ep \mbox{, for every $0\leq k <n$.}
\]

Let $r_{\rho}(\vp_{0,\infty},p,\ep,n)$ denote the least cardinality of a $(p,\ep,n)_{\rho}$-spanning subset of $X$ for $\vp_{0,\infty}$, i.e.
\[
r_{\rho}(\vp_{0,\infty},p,\ep,n):= \min \{ \card R: \mbox{ $R\subset X$ is a $(p,\ep,n)_{\rho}$-spanning set for $\vp$}\}.
\]

Then the \emph{topological entropy} $\hrsp(\vp_{0,\infty})$ of $\vp_{0,\infty}$ is defined as
\begin{align*}\label{e:hrsp}
\hrsp(\vp_{0,\infty})&:= \sup_{p\in \U,\ep>0} r_{\rho}(\vp_{0,\infty},p,\ep),\\ 
\mbox{ where } r_{\rho}(\vp_{0,\infty},p,\ep)&:= \limsup_{n\to\infty} \frac 1n \log r_{\rho}(\vp_{0,\infty},p,\ep,n).\nonumber
\end{align*}
\end{definition}

\begin{lemma}\label{l:3.5}
Let $X$ be a compact Hausdorff space and $\vp_{0,\infty}$ be a sequence of multivalued  maps $\vp_j: X\multimap X$, $j\in\N\cup\{0\}$. The inequality
\[
\hrsp(\vp_{0,\infty})\leq \hrse(\vp_{0,\infty})
\]
holds for the parametric topological entropies $\hrse(\vp_{0,\infty})$ and $\hrsp(\vp_{0,\infty})$ of $\vp_{0,\infty}$, in the sense of Definitions~\ref{d:hrse} and \ref{d:hrsp}.
\end{lemma}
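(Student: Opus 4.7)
The plan is to mimic the classical argument relating spanning and separated sets, adapted to the ``$\rho$'' variant of Definitions~\ref{d:hrse} and~\ref{d:hrsp}. Specifically, I will fix $p\in\U$, $\ep>0$ and $n\in\N$ and establish the pointwise inequality
\[
r_\rho(\vp_{0,\infty},p,\ep,n) \leq s_\rho(\vp_{0,\infty},p,\ep,n);
\]
taking $\limsup_{n\to\infty}\frac{1}{n}\log(\cdot)$ on both sides and then the supremum over $p\in\U$ and $\ep>0$ will immediately yield $\hrsp(\vp_{0,\infty})\leq\hrse(\vp_{0,\infty})$.

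To prove the pointwise inequality, I will take a $(p,\ep,n)_\rho$-separated set $S\subset X$ for $\vp_{0,\infty}$ of maximal cardinality, so that $\card S = s_\rho(\vp_{0,\infty},p,\ep,n)$, and show that $S$ is automatically $(p,\ep,n)_\rho$-spanning. I will argue by contradiction: if $S$ were not spanning, then there would exist $y\in X$ such that, for every $x\in S$, one can find $k$ with $0\leq k <n$ satisfying
\[
p^\rho(\vp^{[k]}(x),\vp^{[k]}(y)) > \ep.
\]
I then claim that $S\cup\{y\}$ is still $(p,\ep,n)_\rho$-separated. Indeed, any two distinct points of $S\cup\{y\}$ either both lie in $S$ (and are separated since $S$ is separated) or one of them is $y$ and the other is some $x\in S$ (and they are separated by the above $k$). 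This contradicts the maximality of $\card S$, so $S$ must be $(p,\ep,n)_\rho$-spanning, giving $r_\rho(\vp_{0,\infty},p,\ep,n)\leq \card S$.

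The only mildly delicate point is to note that a maximal $(p,\ep,n)_\rho$-separated set exists and has finite cardinality, so that the contradiction is meaningful; this is already part of the hypotheses embedded in Definition~\ref{d:hrse}, and in any case it is easily seen from the compactness of $X$ together with the observation that an infinite separated set would accumulate at a point, contradicting the strict inequality $p^\rho(\vp^{[k]}(x),\vp^{[k]}(y))>\ep$ for some coordinate $k$. There is no genuine obstacle here beyond this careful bookkeeping: the proof is structurally identical to the classical ``spanning $\leq$ separated'' lemma of Bowen, only with the Bowen metric replaced by the family of pseudometrics $p^\rho\circ(\vp^{[k]}\times\vp^{[k]})$, $0\leq k<n$, acting on pairs of points of $X$ rather than on their orbits.
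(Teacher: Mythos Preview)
Your proposal is correct and follows essentially the same approach as the paper: both argue that a $(p,\ep,n)_\rho$-separated set of maximal cardinality is automatically $(p,\ep,n)_\rho$-spanning, giving $r_\rho(\vp_{0,\infty},p,\ep,n)\leq s_\rho(\vp_{0,\infty},p,\ep,n)$, and then pass to the limit and supremum. You simply spell out the contradiction argument that the paper leaves implicit in its one-line proof. (A small side remark: your heuristic for finiteness via accumulation is not quite airtight since the separating index $k$ may vary with the pair; the paper handles this point separately in Remark~\ref{r:3.6} via Proposition~\ref{p:3.2}.)
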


\begin{proof}
It is sufficient to notice that a $(p,\ep,n)_\rho$-separating set with the maximal cardinality is a $(p,\ep,n)_\rho$-spanning set. Thus,
\begin{align*}
r_\rho(\vp_{0,\infty},p,\ep,n) \leq s_\rho(\vp_{0,\infty},p,\ep,n),
\end{align*}
and consequently, $\hrsp(\vp_{0,\infty})\leq \hrse(\vp_{0,\infty})$.
\end{proof}

\begin{prop}\label{p:3.2}
Let $(X,\U)$ be a compact uniform space and $\vp_{0,\infty}$ be a sequence of multivalued  maps $\vp_j: X\multimap X$, $j\in\N\cup\{0\}$. Then
\[
\hrse(\vp_{0,\infty})\leq \hCM^{\sepp}(\vp_{0,\infty}) \mbox{ and } \hrsp(\vp_{0,\infty})\leq \hCM^{\spa}(\vp_{0,\infty})
\]
hold for the parametric topological entropies $\hCM^{\sepp}(\vp_{0,\infty})$, $\hCM^{\spa}(\vp_{0,\infty})$ and $\hrse(\vp_{0,\infty})$, $\hrsp(\vp_{0,\infty})$ of $\vp_{0,\infty}$, in the sense of Definitions~\ref{d:hCMsep}-\ref{d:hrsp}.
\end{prop}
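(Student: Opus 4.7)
The plan is to establish both inequalities at the level of cardinalities of separated and spanning sets, by showing that a $(p,\ep,n)_\rho$-separated set is always $(p,\ep,n)_{\CM}$-separated and a $(p,\ep,n)_{\CM}$-spanning set is always $(p,\ep,n)_\rho$-spanning. The link between the two notions is the elementary pointwise bound
\begin{equation*}
p(\bar{x}_k,\bar{y}_k)\geq p^\rho(\vp^{[k]}(x),\vp^{[k]}(y)),\quad 0\leq k<n,
\end{equation*}
valid for every choice of orbits $\bar{x}\in\orbn(\vp_{0,\infty},x)$ and $\bar{y}\in\orbn(\vp_{0,\infty},y)$, because $\bar{x}_k\in\vp^{[k]}(x)$ and $\bar{y}_k\in\vp^{[k]}(y)$ while $p^\rho$ is an infimum over such membership.

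For the separated inequality, I would fix $p\in\U$, $\ep>0$, $n\in\N$, and a $(p,\ep,n)_\rho$-separated $S\subset X$. For distinct $x,y\in S$, Definition~\ref{d:hrse} yields some $k_0$ with $\ep_0:=p^\rho(\vp^{[k_0]}(x),\vp^{[k_0]}(y))>\ep$. By the displayed bound above, every orbit pair satisfies $\max_{0\leq k<n}p(\bar{x}_k,\bar{y}_k)\geq p(\bar{x}_{k_0},\bar{y}_{k_0})\geq\ep_0$, so taking the infimum over orbits gives $p_n^{\CM}(x,y)\geq\ep_0>\ep$. Hence $S$ is also $(p,\ep,n)_{\CM}$-separated, which yields $s_\rho(\vp_{0,\infty},p,\ep,n)\leq s_{\CM}(\vp_{0,\infty},p,\ep,n)$; passing to the $\limsup$ in $n$ and taking suprema over $p$ and $\ep$ gives $\hrse(\vp_{0,\infty})\leq\hCM^{\sepp}(\vp_{0,\infty})$.

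For the spanning inequality, I would start with a $(p,\ep,n)_{\CM}$-spanning set $R\subset X$. Given $x\in X$, pick $y\in R$ with $p_n^{\CM}(x,y)\leq\ep$. Since $p_n^{\CM}$ is an infimum, for every $\delta>0$ there exist orbits $\bar{x}\in\orbn(\vp_{0,\infty},x)$ and $\bar{y}\in\orbn(\vp_{0,\infty},y)$ with $\max_{0\leq k<n}p(\bar{x}_k,\bar{y}_k)\leq\ep+\delta$. For each individual $k$ the pointwise bound then forces $p^\rho(\vp^{[k]}(x),\vp^{[k]}(y))\leq\ep+\delta$, and letting $\delta\to 0^{+}$ delivers $p^\rho(\vp^{[k]}(x),\vp^{[k]}(y))\leq\ep$ for all $0\leq k<n$. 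Thus $R$ is $(p,\ep,n)_\rho$-spanning, whence $r_\rho(\vp_{0,\infty},p,\ep,n)\leq r_{\CM}(\vp_{0,\infty},p,\ep,n)$; taking the $\limsup$ and suprema yields $\hrsp(\vp_{0,\infty})\leq\hCM^{\spa}(\vp_{0,\infty})$.

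The only subtle point, and therefore the part to watch, is the distinction between $>\ep$ and $\geq\ep$ in the separated estimate: the infimum defining $p_n^{\CM}$ need not be attained, so I keep the strict inequality alive by passing through the intermediate value $\ep_0>\ep$ rather than taking the infimum of values only known to be $>\ep$. With that care, no additional hypothesis on $\vp_{0,\infty}$ is required and the argument works for arbitrary multivalued maps on the compact uniform space $(X,\U)$.
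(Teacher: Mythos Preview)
Your proof is correct and follows essentially the same route as the paper: both arguments rest on the pointwise comparison $p_n^{\rho}(x,y)\leq p_n^{\CM}(x,y)$ (the paper packages this as a max--min swap citing \cite[Lemma~36.1]{Ro}, while you unfold it by hand via the bound $p(\bar{x}_k,\bar{y}_k)\geq p^\rho(\vp^{[k]}(x),\vp^{[k]}(y))$), and then read off the cardinality inequalities for separated and spanning sets. Your $\delta$-approximation in the spanning part is harmless but unnecessary, since $p^\rho(\vp^{[k]}(x),\vp^{[k]}(y))\leq p_n^{\CM}(x,y)\leq\ep$ follows directly from the same inequality without passing to approximating orbits.
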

\begin{proof}
Let $p\in\U$, $\ep>0$ and $n\in\N$.

Taking, for $x,y\in X$,
\begin{align*}
p_n^{\rho}(x,y) &= \max_{i\in\{0,\ldots,n-1\}} \left(\inf_{\bar{x}\in\orbn(\vp_{0,\infty},x),\bar{y}\in\orbn(\vp_{0,\infty},y)} p(\bar{x}_i,\bar{y}_i)\right)\\ 
\mbox{ and (cf. Definition~\ref{d:3.5})}\\
p_n^{\CM}(x,y) &= \inf_{\bar{x}\in\orbn(\vp_{0,\infty},x),\bar{y}\in\orbn(\vp_{0,\infty},y)} \left(\max_{i\in\{0,\ldots,n-1\}} p(\bar{x}_i,\bar{y}_i)\right),
\end{align*}
we have, by means of the well-known principle (see e.g. \cite[Lemma 36.1]{Ro}),
\[
p_n^{\rho}(x,y) \leq p_n^{\CM}(x,y).
\]

Thus, every $(p,\ep,n)_{\CM}$-separated set is also $(p,\ep,n)_{\rho}$-separated, by which $s_{\rho}(\vp_{0,\infty},p,\ep,n) \leq s_{\CM}(\vp_{0,\infty},p,\ep,n)$, and so $\hrse(\vp_{0,\infty})\leq \hCM^{\sepp}(\vp_{0,\infty})$.

Similarly, every $(p,\ep,n)_{\rho}$-spanning set is also $(p,\ep,n)_{\CM}$-spanning. Consequently $r_{\rho}(\vp_{0,\infty},p,\ep,n) \leq r_{\CM}(\vp_{0,\infty},p,\ep,n)$, and so $\hrsp(\vp_{0,\infty})\leq \hCM^{\spa}(\vp_{0,\infty})$.

%
\end{proof}

\begin{remark}\label{r:3.6}
The existence of a $(p,\ep,n)_\rho$-separated subset of $X$ for $\vp_{0,\infty}$ with the maximal and finite cardinality in Definition~\ref{d:hrse} follows directly from the arguments of the proof of Proposition~\ref{p:3.2}. It justifies the correctness of Definition~\ref{d:hrse}.

Furthermore, the existence of a finite $(p,\ep,n)_\rho$-spanning subset of $X$ for $\vp_{0,\infty}$ in Definition~\ref{d:hrsp} follows directly from the arguments of the proof of Lemma~\ref{l:3.5}, which justifies the correctness of Definition~\ref{d:hrsp}.
\end{remark}

\begin{lemma}\label{l:3.6}
Let $X$ be a compact Hausdorff space, $\vp_{0,\infty}$ be a sequence of arbitrary multivalued maps $\vp_j:X\multimap X$, $j\in\N\cup\{0\}$, and $\psi_{0,\infty}$ be a sequence of selections $\psi_j\subset\vp_j$ of $\vp_j$, i.e. $\psi_j(x)\subset \vp_j(x)$, $j\in\N\cup\{0\}$, for every $x\in X$. Then the following inequalities
\begin{align*}
\hrse(\vp_{0,\infty}) \leq \hrse(\psi_{0,\infty}) \mbox{ and }\hrsp(\vp_{0,\infty}) \leq \hrsp(\psi_{0,\infty})
\end{align*}
hold for parametric topological entropies $\hrse(\vp_{0,\infty})$, $\hrse(\psi_{0,\infty})$ of $\vp_{0,\infty}$ and $\psi_{0,\infty}$, in the sense of Definition~\ref{d:hrse} and $\hrsp(\vp_{0,\infty})$, $\hrsp(\psi_{0,\infty})$ of $\vp_{0,\infty}$ and $\psi_{0,\infty}$, in the sense of Definition~\ref{d:hrsp}.
\end{lemma}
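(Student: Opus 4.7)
The plan is to exploit the elementary monotonicity property of the quantity $p^\rho(A,B)=\inf\{p(a,b):a\in A,b\in B\}$ under set inclusion: if $A'\subset A$ and $B'\subset B$, then $p^\rho(A',B')\geq p^\rho(A,B)$, because one is taking the infimum over a smaller set of pairs. Combined with the obvious selection inclusion $\psi^{[k]}(x)\subset\varphi^{[k]}(x)$ (proved by an easy induction on $k$ from $\psi_j(x)\subset\varphi_j(x)$), this will give
\[
p^\rho(\varphi^{[k]}(x),\varphi^{[k]}(y)) \le p^\rho(\psi^{[k]}(x),\psi^{[k]}(y))
\]
for every $x,y\in X$ and every $k\ge 0$. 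The two desired inequalities will then fall out by comparing the defining quantifiers.

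First I would handle the separated case. Take any $(p,\varepsilon,n)_\rho$-separated set $S\subset X$ for $\varphi_{0,\infty}$. For any pair of distinct points $x,y\in S$, there is some $k$ with $0\le k<n$ satisfying $p^\rho(\varphi^{[k]}(x),\varphi^{[k]}(y))>\varepsilon$. By the monotonicity displayed above, the same $k$ gives $p^\rho(\psi^{[k]}(x),\psi^{[k]}(y))>\varepsilon$, so $S$ is also $(p,\varepsilon,n)_\rho$-separated for $\psi_{0,\infty}$. Hence $s_\rho(\varphi_{0,\infty},p,\varepsilon,n)\leq s_\rho(\psi_{0,\infty},p,\varepsilon,n)$, and passing to $\limsup_{n\to\infty}\frac{1}{n}\log(\cdot)$ and then to the supremum over $p\in\U$ and $\varepsilon>0$ yields $\hrse(\varphi_{0,\infty})\le\hrse(\psi_{0,\infty})$.

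Next I would handle the spanning case, which runs in the opposite direction but uses the same inequality. Let $R\subset X$ be a $(p,\varepsilon,n)_\rho$-spanning set for $\psi_{0,\infty}$. For any $x\in X$ there exists $y\in R$ with $p^\rho(\psi^{[k]}(x),\psi^{[k]}(y))\leq\varepsilon$ for every $0\le k<n$. By monotonicity,
\[
p^\rho(\varphi^{[k]}(x),\varphi^{[k]}(y)) \le p^\rho(\psi^{[k]}(x),\psi^{[k]}(y)) \le \varepsilon
\]
for the same $y$ and all such $k$. Thus $R$ is a $(p,\varepsilon,n)_\rho$-spanning set for $\varphi_{0,\infty}$ as well, so $r_\rho(\varphi_{0,\infty},p,\varepsilon,n)\le r_\rho(\psi_{0,\infty},p,\varepsilon,n)$, and the analogous limit procedure produces $\hrsp(\varphi_{0,\infty})\le\hrsp(\psi_{0,\infty})$.

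There is no real obstacle here; the only issue worth flagging is the need to verify $\psi^{[k]}(x)\subset\varphi^{[k]}(x)$, but this follows by a trivial induction: if $\psi^{[k-1]}(x)\subset\varphi^{[k-1]}(x)$, then $\psi^{[k]}(x)=\bigcup_{z\in\psi^{[k-1]}(x)}\psi_{k-1}(z)\subset\bigcup_{z\in\varphi^{[k-1]}(x)}\varphi_{k-1}(z)=\varphi^{[k]}(x)$, using $\psi_{k-1}\subset\varphi_{k-1}$ pointwise. Everything else is pure bookkeeping with the supremum/infimum definitions, exactly in the spirit of the analogous argument already used in Lemma~\ref{l:hCMsel} for $\hCM^{\sepp}$ and $\hCM^{\spa}$.
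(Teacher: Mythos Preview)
Your proof is correct and is exactly the argument the paper has in mind: the paper's own proof merely states that the statement ``can be done quite analogously as in \cite[Proposition 9]{AL2}'', and the monotonicity-of-$p^\rho$ argument you spell out is precisely that analogous computation. You have simply filled in the details the paper omits.
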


\begin{proof}
The statement can be done quite analogously as in \cite[Proposition 9]{AL2}.
\end{proof}

We can sum up the foregoing investigation as follows.
\begin{theorem}\label{t:3.1}
Let $X$ be a compact Hausdorff space, $\vp_{0,\infty}$ be a sequence of arbitrary multivalued maps $\vp_j:X\multimap X$, $j\in\N\cup\{0\}$, and $\psi_{0,\infty}$ be a sequence of selections $\psi_j\subset\vp_j$ of $\vp_j$, i.e. $\psi_j(x)\subset \vp_j(x)$, $j\in\N\cup\{0\}$, for every $x\in X$. Then the following inequalities
\begin{align*}
&\hrsp(\vp_{0,\infty}) \leq 
\begin{cases}
\hCM^{\spa}(\vp_{0,\infty})&\leq\\
\hrsp(\psi_{0,\infty})&\leq
\end{cases}
\begin{rrcases}
\hCM^{\sepp}(\vp_{0,\infty})\\
\hCM^{\spa}(\psi_{0,\infty})\\
\hrse(\psi_{0,\infty})
\end{rrcases}\\
&\leq \hCM^{\sepp}(\psi_{0,\infty})\leq\hKT(\psi_{0,\infty})\leq\hKT(\vp_{0,\infty})
\end{align*}
hold for the parametric topological entropies $\hKT(\vp_{0,\infty})$, $\hKT(\psi_{0,\infty})$ of $\vp_{0,\infty}$ and $\psi_{0,\infty}$, in the sense of Definition~\ref{d:hKT}, $\hCM^{\sepp}(\vp_{0,\infty})$, $\hCM^{\sepp}(\psi_{0,\infty})$ of $\vp_{0,\infty}$ and $\psi_{0,\infty}$, in the sense of Definition~\ref{d:hCMsep}, $\hCM^{\spa}(\vp_{0,\infty})$, $\hCM^{\spa}(\psi_{0,\infty})$ of $\vp_{0,\infty}$ and $\psi_{0,\infty}$, in the sense of Definition~\ref{d:hCMspa}, $\hrse(\vp_{0,\infty})$, $\hrse(\psi_{0,\infty})$ of $\vp_{0,\infty}$ and $\psi_{0,\infty}$, in the sense of Definition~\ref{d:hrse}, $\hrsp(\vp_{0,\infty})$, $\hrsp(\psi_{0,\infty})$ of $\vp_{0,\infty}$ and $\psi_{0,\infty}$, in the sense of Definition~\ref{d:hrsp}.
\end{theorem}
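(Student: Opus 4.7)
The plan is to read the displayed chain as a compact summary of the pairwise bounds already established throughout Section \ref{s:3} and to verify each link by citing the appropriate result, applied either to $\vp_{0,\infty}$, to its selection sequence $\psi_{0,\infty}$, or to the pair. No new estimation is required; the proof is essentially bookkeeping.

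I would first handle the rightmost segment. The inequality $\hKT(\psi_{0,\infty}) \leq \hKT(\vp_{0,\infty})$ comes from Lemma \ref{l:3.2}, and $\hCM^{\sepp}(\psi_{0,\infty}) \leq \hKT(\psi_{0,\infty})$ from Proposition \ref{p:3.1} applied to $\psi_{0,\infty}$. Each of the three items inside the right (rrcases) brace is bounded by $\hCM^{\sepp}(\psi_{0,\infty})$: namely $\hCM^{\sepp}(\vp_{0,\infty}) \leq \hCM^{\sepp}(\psi_{0,\infty})$ by Lemma \ref{l:hCMsel}; $\hCM^{\spa}(\psi_{0,\infty}) \leq \hCM^{\sepp}(\psi_{0,\infty})$ by Lemma \ref{l:hCMspasep} applied to $\psi_{0,\infty}$; and $\hrse(\psi_{0,\infty}) \leq \hCM^{\sepp}(\psi_{0,\infty})$ by Proposition \ref{p:3.2} applied to $\psi_{0,\infty}$. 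I would then address the two items in the left (cases) brace: $\hCM^{\spa}(\vp_{0,\infty}) \leq \hCM^{\sepp}(\vp_{0,\infty})$ by Lemma \ref{l:hCMspasep}; $\hCM^{\spa}(\vp_{0,\infty}) \leq \hCM^{\spa}(\psi_{0,\infty})$ by Lemma \ref{l:hCMsel}; $\hrsp(\psi_{0,\infty}) \leq \hCM^{\spa}(\psi_{0,\infty})$ by Proposition \ref{p:3.2} applied to $\psi_{0,\infty}$; and $\hrsp(\psi_{0,\infty}) \leq \hrse(\psi_{0,\infty})$ by Lemma \ref{l:3.5} applied to $\psi_{0,\infty}$. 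Finally, the leftmost bounds $\hrsp(\vp_{0,\infty}) \leq \hCM^{\spa}(\vp_{0,\infty})$ and $\hrsp(\vp_{0,\infty}) \leq \hrsp(\psi_{0,\infty})$ come from Proposition \ref{p:3.2} and Lemma \ref{l:3.6}, respectively.

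The main obstacle is interpretive rather than mathematical: the cases/rrcases display must be read as recording the partial order of the bounds actually established rather than as claiming every cross-pair between the two blocks. Cross-pairs such as $\hCM^{\spa}(\vp_{0,\infty}) \leq \hrse(\psi_{0,\infty})$ or $\hrsp(\psi_{0,\infty}) \leq \hCM^{\sepp}(\vp_{0,\infty})$ are not available from the cited results, because attempting to chain them through $\hCM^{\sepp}(\psi_{0,\infty})$ runs into an intermediate inequality that points the wrong way. Once this convention is adopted, the theorem is immediate from the citations above.
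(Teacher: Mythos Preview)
Your proposal is correct and follows exactly the paper's own approach: the paper's proof consists of the single sentence that the statement follows directly from Propositions~\ref{p:3.1}, \ref{p:3.2} and Lemmas~\ref{l:3.2}--\ref{l:3.6}, and you have simply unpacked which of these yields each individual link. Your observation about the cases/rrcases display recording only the partial order actually established (rather than all cross-pairs) is a useful clarification that the paper leaves implicit.
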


\begin{proof}
The statement follows directly from Propositions~\ref{p:3.1}, \ref{p:3.2} and Lemmas~\ref{l:3.2}-\ref{l:3.6}.
\end{proof}

\begin{remark}\label{r:3.7}
Since for a sequence $f_{0,\infty}$ of arbitrary single-valued maps $f_j:X\to X$, $j\in\N\cup\{0\}$, in a compact Hausdorff space $X$, all the definitions of topological entropy considered in Theorem~\ref{t:3.1} coincide, we can correctly define the parametric topological entropy $h(f_{0,\infty})$ of $f_{0,\infty}$ as
\begin{align}\label{eq:r:3.7}
h(f_{0,\infty})&:= \hKT(f_{0,\infty}) =\hCM^{\sepp}(f_{0,\infty})=\hCM^{\spa}(f_{0,\infty})\\
&= \hrse(f_{0,\infty})=\hrsp(f_{0,\infty}).\nonumber
\end{align}
\end{remark}

\section{Topological entropy of semicontinuous maps}
In this section, the multivalued maps under consideration will be at least semicontinuous. We start with l.s.c. maps.

The following two definitions will be introduced especially in order to be comparable with Definitions~\ref{d:hCMsep} and \ref{d:hCMspa}. Before their formulation, it will be convenient to present two auxiliary lemmas.

Hence, let $X$ be a compact topological space and $\vp_{0,\infty}$ be a sequence of l.s.c. multivalued maps $\vp_j:X\multimap X$, $j\in\N\cup\{0\}$. For $A_i\subset X$, $i=0,\ldots,n-1$, $n\in\N$, we define the set
\begin{align*}
\FF(\vp_{0,\infty};A_0,\ldots,A_{n-1}):= \pi_1(\orbn(\vp_{0,\infty})\cap (A_0\times\ldots \times A_{n-1}))\subset X,
\end{align*}
where $\pi_1$ denotes the projection of an $n$-orbit $\{x_i\}_{i=0}^{n-1}$ to its first component $x_0$, i.e. $\pi_1(\{x_i\}_{i=0}^{n-1})=x_0$.

In other words, it is a set of points $x_0\in X$ such that there exists an $n$-orbit $(x_0,\ldots,x_{n-1})\in\orbn(\vp_{0,\infty})$ with $x_i\in A_i$, $i=0,\ldots, n-1$.

\begin{lemma}\label{l:4.1}
Under the above assumptions, 
\begin{align*}
\FF(\vp_{0,\infty}; A_0)&=\pi_1(A_0),\\
\FF(\vp_{0,\infty}; A_0,\ldots,A_{n-1}) &= (\vp_0)^{-1}_+((\vp_1)^{-1}_+(\ldots(\vp_{n-2})^{-1}_+(A_{n-1})\cap A_{n-2}\ldots)\cap A_1)\cap A_0,
\end{align*}
 for $n>1$.
\end{lemma}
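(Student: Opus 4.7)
The identity to be proved is purely set-theoretic: the lower semicontinuity hypothesis on the $\vp_j$'s is not actually needed for the formula itself, only (presumably) for later applications. My plan is to prove it by induction on $n$, unwinding the definition one step from the left.

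The base case $n=1$ is immediate: $\orb{1}(\vp_{0,\infty})=X$, so $\orb{1}(\vp_{0,\infty})\cap A_0=A_0$ and $\pi_1$ restricted to $X^1=X$ is the identity, which gives $\FF(\vp_{0,\infty};A_0)=A_0$. For the inductive step, it is convenient to introduce, for any $k\geq 0$ and any shifted sequence $\vp_{k,\infty}=\{\vp_j\}_{j=k}^{\infty}$, the analogous set
\[
\FF_k:=\FF(\vp_{k,\infty};A_k,\ldots,A_{n-1})=\pi_1\bigl(\orbn[n-k](\vp_{k,\infty})\cap(A_k\times\cdots\times A_{n-1})\bigr).
\]
With this notation the claim becomes $\FF_0=(\vp_0)^{-1}_+(\FF_1)\cap A_0$ and, more generally, $\FF_k=(\vp_k)^{-1}_+(\FF_{k+1})\cap A_k$ for $0\leq k\leq n-2$.

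The key step is the following direct unpacking of the definition. A point $x_k$ belongs to $\FF_k$ exactly when there exist $x_{k+1},\ldots,x_{n-1}$ with $x_k\in A_k$, $x_{i+1}\in\vp_i(x_i)$ for $k\leq i\leq n-2$, and $x_i\in A_i$ for all $k\leq i\leq n-1$. Separating the first index, this is equivalent to $x_k\in A_k$ together with the existence of some $x_{k+1}\in\vp_k(x_k)$ for which $(x_{k+1},\ldots,x_{n-1})\in\orbn[n-k-1](\vp_{k+1,\infty})\cap(A_{k+1}\times\cdots\times A_{n-1})$; that is, $x_{k+1}\in\vp_k(x_k)\cap\FF_{k+1}$. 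Equivalently, $x_k\in A_k$ and $\vp_k(x_k)\cap\FF_{k+1}\neq\emptyset$, which by the definition of the ``large'' preimage reads $x_k\in(\vp_k)^{-1}_+(\FF_{k+1})\cap A_k$.

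Applying this recursion from $k=0$ up to $k=n-2$, and using $\FF_{n-1}=A_{n-1}$ from the base case, one obtains
\[
\FF_0=(\vp_0)^{-1}_+\!\Bigl((\vp_1)^{-1}_+\!\bigl(\cdots(\vp_{n-2})^{-1}_+(A_{n-1})\cap A_{n-2}\cdots\bigr)\cap A_1\Bigr)\cap A_0,
\]
which is exactly the stated formula. The only point requiring a little care is the bookkeeping of the nested intersections with the $A_i$'s at each level; there is no genuine obstacle, since the argument is a straightforward induction and no topological property of the $\vp_j$'s needs to be invoked.
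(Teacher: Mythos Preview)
Your proof is correct and follows essentially the same route as the paper: both arguments peel off the first coordinate to obtain the recursion $\FF(\vp_{0,\infty};A_0,\ldots,A_{n-1})=A_0\cap(\vp_0)^{-1}_+(\FF(\vp_{1,\infty};A_1,\ldots,A_{n-1}))$ (the paper writes this with the shifted sequence $\psi_j:=\vp_{j+1}$, you with the notation $\FF_k$), and then unfold it down to the base case. Your write-up is in fact a bit more explicit than the paper's, which stops at the one-step recursion.
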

\begin{proof}
If $n=1$, then clearly $\FF(\vp_{0,\infty}; A_0)=\pi_1(X\cap A_0) = \pi_1(A_0)$. For $n>1$,
\begin{align*}
\FF(\vp_{0,\infty};A_0,\ldots,A_{n-1}) &= \pi_1(\orbn(\vp_{0,\infty})\cap(A_0\times \ldots \times A_{n-1}))\\
&= \{x_0\in A_0: \vp_0(x_0)\cap(\orb{n-1}(\psi_{0,\infty})\cap(A_1\times \ldots\times A_{n-1}))\}\\
&= A_0 \cap (\vp_0)^{-1}_{+}(\FF(\psi_{0,\infty};A_1,\ldots,A_{n-1})),
\end{align*}
where $\psi_j:=\vp_{j+1}$, for $j\in\N\cup\{0\}$.
\end{proof}

Defining, for an open cover $\A$ of $X$ and $n\in\N$, the systems of sets $\A^n$ and $\FF(\vp_{0,\infty};\A^n)$ as
\begin{align*}
\A^n &:= \{A_0\times\ldots\times A_{n-1}: A_0,\ldots, A_{n-1}\in\A\},\\
\FF(\vp_{0,\infty};\A^n) &:= \{\FF(\vp_{0,\infty};A_0,\ldots,A_{n-1}): A_0,\ldots, A_{n-1}\in\A\},
\end{align*}
we can state the following lemma.

\begin{lemma}\label{l:4.2}
Let $X$ be a compact topological space and $\vp_j: X\multimap X$, $j\in\N\cup\{0\}$, be a sequence $\vp_{0,\infty}$ of l.s.c. maps. If $\A$ is an open cover of $X$ and $n\in\N$, then $\FF(\vp_{0,\infty}; \A^n)$  is also an open cover of $X$.
\end{lemma}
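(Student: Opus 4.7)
My plan is to prove the two required properties separately: first that each member of $\FF(\vp_{0,\infty};\A^n)$ is open, and second that these sets together cover $X$. The key inputs are the explicit formula from Lemma~\ref{l:4.1}, the lower semicontinuity hypothesis (which ensures that $(\vp_j)^{-1}_+(B)$ is open whenever $B$ is open), and the standing assumption that all multivalued maps have nonempty values.

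For openness, I would argue by descending induction on the depth of the nested preimages in Lemma~\ref{l:4.1}. The innermost set is $A_{n-1}\in\A$, which is open. Since $\vp_{n-2}$ is l.s.c., $(\vp_{n-2})^{-1}_+(A_{n-1})$ is open, and intersecting with the open set $A_{n-2}$ preserves openness. Applying $(\vp_{n-3})^{-1}_+$ to this open set again yields an open set, and so on. Iterating this pattern $n-1$ times — l.s.c. preimage followed by intersection with an element of $\A$ — shows that $\FF(\vp_{0,\infty};A_0,\ldots,A_{n-1})$ is open in $X$. (For $n=1$, the set equals $A_0$, which is trivially open.)

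For the covering property, pick any $x\in X$ and build an $n$-orbit starting at $x$ by successive selection: set $x_0:=x$, then choose $x_1\in\vp_0(x_0)$, $x_2\in\vp_1(x_1)$, \ldots, $x_{n-1}\in\vp_{n-2}(x_{n-2})$, which is possible because each $\vp_j$ has nonempty values. This yields $(x_0,\ldots,x_{n-1})\in\orbn(\vp_{0,\infty})$. Since $\A$ covers $X$, for each $i=0,\ldots,n-1$ there exists $A_i\in\A$ with $x_i\in A_i$. Then $(x_0,\ldots,x_{n-1})\in A_0\times\cdots\times A_{n-1}$, so $x=x_0\in\pi_1(\orbn(\vp_{0,\infty})\cap(A_0\times\cdots\times A_{n-1}))=\FF(\vp_{0,\infty};A_0,\ldots,A_{n-1})$, which lies in $\FF(\vp_{0,\infty};\A^n)$.

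I expect no serious obstacle; the only subtle point is correctly reading off the nested structure in Lemma~\ref{l:4.1} so that the induction for openness uses only l.s.c. preimages of \emph{open} arguments, and verifying that the parenthesization matches the required order of intersections. The covering half is entirely routine once one exploits nonemptiness of values to produce an orbit through the chosen point $x$.
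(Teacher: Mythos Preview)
Your proof is correct and follows essentially the same approach as the paper: openness is obtained from the nested preimage formula of Lemma~\ref{l:4.1} combined with lower semicontinuity, exactly as in the paper. For the covering property the paper argues globally via $\bigcup \FF(\vp_{0,\infty};\A^n)=\pi_1(\orbn(\vp_{0,\infty})\cap X^n)=X$, while you argue pointwise by constructing an orbit through a given $x$; these are two presentations of the same idea, both relying on nonemptiness of values to ensure $\pi_1(\orbn(\vp_{0,\infty}))=X$.
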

\begin{proof}
If $A_0\times\ldots\times A_{n-1} \in\A^n$, then $\FF(\vp_{0,\infty};A_0,\ldots,A_{n-1})$ must be open by Lemma~\ref{l:4.1}, because the large preimage of an l.s.c. map is open and an intersection of a finite number of open sets is open as well.

Moreover, the system $\FF(\vp_{0,\infty}; \A^n)$ is a cover of $X$, because
\begin{align*}
\bigcup \FF(\vp_{0,\infty}; \A^n) &= \bigcup \{ \pi_1(\orbn(\vp_{0,\infty})\cap (A_0\times\ldots \times A_{n-1})): A_0\times\ldots\times A_{n-1}\in\A^n\}\\
&= \pi_1(\orbn(\vp_{0,\infty})\cap \bigcup \A^n) = \pi_1(\orbn(\vp_{0,\infty}\cap X^n) = X.
\end{align*}

\end{proof}

\begin{definition}\label{d:hU}
Let $X$ be a compact topological space and $\vp_j: X\multimap X$, $j\in\N\cup\{0\}$, be a sequence $\vp_{0,\infty}$ of l.s.c. maps. The \emph{topological entropy} $\hU(\vp,\A)$ of $\vp_{0,\infty}$ on the open cover $\A$ of $X$ takes the form
\begin{align*}
\hU(\vp_{0,\infty},\A) := \limsup_{n\to\infty} \frac1{n} \log N(\FF(\vp_{0,\infty},\A^n),
\end{align*}
where $N(\FF(\vp_{0,\infty};\A^n))$ stands for the minimal cardinality of a subcover of $\FF(\vp_{0,\infty};\A^n)$.

The \emph{topological (upper covering) entropy} $\hU(\vp_{0,\infty})$ of $\vp_{0,\infty}$ reads as
\begin{align*}
\hU(\vp_{0,\infty}) := \sup \{ \hU(\vp_{0,\infty},\A): \mbox{$\A$ is an open cover of $X$}\}.
\end{align*}
\end{definition}

\begin{remark}
The existence of a subcover with the minimal cardinality in Definition~\ref{d:hU} follows from the fact that $\FF(\vp_{0,\infty};\A^n)$ is an open cover of a compact topological space $X$ (see Lemma~\ref{l:4.2}), which justifies the correctness of Definition~\ref{d:hU}.
\end{remark}


\begin{prop}\label{p:4.1}
Let $(X,\U)$ be a compact uniform space and $\vp_{0,\infty}$ be a sequence of l.s.c. maps $\vp_j:X\multimap X$, $j\in\N\cup\{0\}$. Then the inequality
\[
\hU(\vp_{0,\infty}) \leq \hKT(\vp_{0,\infty})
\]
holds for the topological entropies $\hU(\vp_{0,\infty})$, $\hKT(\vp_{0,\infty})$ of $\vp_{0,\infty}$, in the sense of Definitions~\ref{d:hU} and \ref{d:hKT}.
\end{prop}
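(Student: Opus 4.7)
The plan is to bound the covering number $N(\FF(\vp_{0,\infty},\A^n))$ appearing in Definition~\ref{d:hU} by a minimal $(p,\ep,n)_{\KT}$-spanning number of $\orbn(\vp_{0,\infty})$ for carefully chosen $p\in\U$ and $\ep>0$. Starting from an arbitrary open cover $\A$ of $X$, I would apply the uniform Lebesgue lemma (Lemma~\ref{l:Lebesgue}) to obtain $p\in\U$ and $\delta>0$ such that for every $x\in X$ some $A\in\A$ satisfies $B_p(x,\delta)\subset A$. Then I fix $\ep:=\delta/2$, so that the eventual spanning estimate will translate cleanly into a covering statement.

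Next, I take a minimal $(p,\ep,n)_{\KT}$-spanning set $R\subset\orbn(\vp_{0,\infty})$ with $\card R = r_{\KT}(\vp_{0,\infty},p,\ep,n)$. For each $\bar{x}=(x_0,\ldots,x_{n-1})\in R$ and each $i\in\{0,\ldots,n-1\}$, I pick (by the Lebesgue property) some $A_i^{\bar{x}}\in\A$ with $B_p(x_i,\delta)\subset A_i^{\bar{x}}$. This produces a candidate subfamily
\[
\{\FF(\vp_{0,\infty};A_0^{\bar{x}},\ldots,A_{n-1}^{\bar{x}}): \bar{x}\in R\}\subset\FF(\vp_{0,\infty},\A^n)
\]
of cardinality at most $\card R$, and my aim will be to show that it already covers $X$.

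The key verification step is the covering property. For an arbitrary $y_0\in X$, using that every $\vp_j$ has nonempty values, I may select some $n$-orbit $(y_0,y_1,\ldots,y_{n-1})\in\orbn(\vp_{0,\infty},y_0)$. The spanning property then provides $\bar{x}\in R$ with $p(x_i,y_i)\leq\ep<\delta$ for every $i$, whence $y_i\in B_p(x_i,\delta)\subset A_i^{\bar{x}}$. Thus $(y_0,\ldots,y_{n-1})\in\orbn(\vp_{0,\infty})\cap(A_0^{\bar{x}}\times\ldots\times A_{n-1}^{\bar{x}})$, and projection on the first coordinate gives $y_0\in\FF(\vp_{0,\infty};A_0^{\bar{x}},\ldots,A_{n-1}^{\bar{x}})$. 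Consequently $N(\FF(\vp_{0,\infty},\A^n))\leq r_{\KT}(\vp_{0,\infty},p,\ep,n)$; taking logarithms, dividing by $n$ and passing to the limsup yields $\hU(\vp_{0,\infty},\A)\leq\hKT^{\spa}(\vp_{0,\infty},p,\ep)\leq\hKT(\vp_{0,\infty})$, and the supremum over open covers $\A$ completes the argument.

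The main (and essentially only) subtlety lies in the mismatch between the condition $p(x_i,y_i)\leq\ep$ coming from Definition~\ref{d:hsepspan} of a spanning set and the strictly open Lebesgue balls $B_p(x,\delta)$ supplied by Lemma~\ref{l:Lebesgue}; choosing $\ep<\delta$ (concretely $\ep=\delta/2$) bridges this gap. No further use of the l.s.c.\ hypothesis is required in this argument itself beyond Lemma~\ref{l:4.2}, which guarantees that $N(\FF(\vp_{0,\infty},\A^n))$ is well-defined as the covering number of an honest open cover of the compact space $X$.
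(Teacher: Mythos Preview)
Your argument is correct, but it takes a different route from the paper's proof. The paper works with the open-cover characterization of $\hKT$ established in \cite[Definition~3.7 and Theorem~3.4]{AL4}: given an open cover $\A$ of $X$, it picks a subcover $\B\subset\A^n$ of minimal cardinality among those covering $\orbn(\vp_{0,\infty})$, observes (via Lemma~\ref{l:4.2}) that $\{\FF(\vp_{0,\infty};B):B\in\B\}$ is an open cover of $X$, and concludes $N(\FF(\vp_{0,\infty};\A^n))\leq\card\B$ directly. By contrast, you stay with the spanning-set formulation of Definition~\ref{d:hsepspan} and use the Lebesgue lemma (Lemma~\ref{l:Lebesgue}) to translate a minimal $(p,\ep,n)_{\KT}$-spanning set into a covering subfamily of $\FF(\vp_{0,\infty};\A^n)$. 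Your approach has the advantage of being entirely self-contained within the present paper, requiring no appeal to the external equivalence theorem in \cite{AL4}; the paper's proof is shorter but leans on that reference. Both arguments ultimately exploit the same projection idea $\orbn(\vp_{0,\infty})\cap(A_0\times\cdots\times A_{n-1})\mapsto\FF(\vp_{0,\infty};A_0,\ldots,A_{n-1})$, just accessed through different characterizations of $\hKT$.
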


\begin{proof}
Let $\A$ be an open cover of $X$ and $n\in\N$. Assume that $\B$ is a subcover of $\A^n$ with the minimal cardinality among those covering $\orbn(\vp_{0,\infty})$. Then $\FF(\vp_{0,\infty};\B):=\{\FF(\vp_{0,\infty};B):B\in\B\}$ is, according to Lemma~\ref{l:4.2}, an open cover of $X$. Therefore, $N(\FF(\vp_{0,\infty};\B))\leq\card\B$, because $\card\FF(\vp_{0,\infty};\B)\leq\card\B$. Consequently $\hU(\vp_{0,\infty})\leq\hKT(\vp_{0,\infty})$ (see \cite[Definition 3.7 and Theorem 3.4]{AL4}).
\end{proof}

\begin{prop}\label{p:4.2}
Let $(X,\U)$ be a compact uniform space and $\vp_{0,\infty}$ be a sequence of l.s.c. maps $\vp_j:X\multimap X$, $j\in\N\cup\{0\}$. Then the inequality
\[
\hCM^{\sepp}(\vp_{0,\infty})\leq \hU(\vp_{0,\infty})
\]
holds for the topological entropies $\hCM^{\sepp}(\vp_{0,\infty})$, $\hU(\vp_{0,\infty})$ of $\vp_{0,\infty}$, in the sense of Definitions~\ref{d:hCMsep} and \ref{d:hU}.
\end{prop}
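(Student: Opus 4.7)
The plan is to show that for every $p\in\U$ and $\ep>0$ one can select a single open cover $\A$ of $X$ such that, for every $n\in\N$,
\[
s_{\CM}(\vp_{0,\infty},p,\ep,n)\leq N(\FF(\vp_{0,\infty};\A^n)).
\]
Passing to $\limsup_{n\to\infty}\frac1n\log$ and then to the sup over $p$ and $\ep$ will deliver the claim, because the right-hand side is bounded by $\hU(\vp_{0,\infty},\A)\leq\hU(\vp_{0,\infty})$.

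For the construction of $\A$, I would take $\A:=\{B_p(x,\ep/2):x\in X\}$. This is an open cover of $X$, and the triangle inequality for the pseudometric $p$ ensures that every $A\in\A$ satisfies $\operatorname{diam}_p(A)<\ep$. By Lemma~\ref{l:4.2}, since the $\vp_j$ are l.s.c., the family $\FF(\vp_{0,\infty};\A^n)$ is an open cover of $X$, so the compactness of $X$ guarantees that $N(\FF(\vp_{0,\infty};\A^n))$ is a finite positive integer.

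The crux of the argument is the following separation-versus-covering lemma: if $S\subset X$ is a $(p,\ep,n)_{\CM}$-separated set for $\vp_{0,\infty}$, then each member of $\FF(\vp_{0,\infty};\A^n)$ contains at most one point of $S$. Indeed, suppose two distinct points $x,y\in S$ both lie in some $\FF(\vp_{0,\infty};A_0,\ldots,A_{n-1})$ with $A_0,\ldots,A_{n-1}\in\A$. By the definition of $\FF$ through $\pi_1$ and $\orbn(\vp_{0,\infty})$, there exist orbits $\bar x\in\orbn(\vp_{0,\infty},x)$ and $\bar y\in\orbn(\vp_{0,\infty},y)$ with $\bar x_i,\bar y_i\in A_i$ for $i=0,\ldots,n-1$. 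Since $\operatorname{diam}_p(A_i)<\ep$, we obtain $\max_{0\leq i\leq n-1}p(\bar x_i,\bar y_i)<\ep$, which forces $p_n^{\CM}(x,y)<\ep$ and contradicts the assumption that $S$ is $(p,\ep,n)_{\CM}$-separated. Consequently, restricting to any subcover $\B\subset\FF(\vp_{0,\infty};\A^n)$ of minimal cardinality, $\card S\leq\card\B=N(\FF(\vp_{0,\infty};\A^n))$, and the desired estimate for $s_{\CM}$ follows.

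The only delicate point is matching the diameter of the cover $\A$ with the scale $\ep$ that appears in the $\CM$-separation. That is why I select balls of radius $\ep/2$ so that the $p$-diameter of every $A\in\A$ is strictly less than $\ep$; this strict inequality is what contradicts the strict inequality $p_n^{\CM}(x,y)>\ep$ in Definition~\ref{d:3.5}. No further subtleties arise: the limit and supremum steps that finish the proof,
\[
\hCM^{\sepp}(\vp_{0,\infty},p,\ep)\leq\hU(\vp_{0,\infty},\A)\leq\hU(\vp_{0,\infty}),
\]
followed by taking $\sup_{p\in\U,\ep>0}$ on the left, are routine from Definitions~\ref{d:hCMsep} and~\ref{d:hU}.
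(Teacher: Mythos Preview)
Your proof is correct and follows essentially the same route as the paper: take $\A=\{B_p(x,\ep/2):x\in X\}$, show that any $(p,\ep,n)_{\CM}$-separated set $S$ can meet each element of $\FF(\vp_{0,\infty};\A^n)$ in at most one point (via orbits $\bar x,\bar y$ landing in the same $A_i$'s), deduce $\card S\le N(\FF(\vp_{0,\infty};\A^n))$, and pass to $\limsup$ and suprema. Your version is slightly more explicit in invoking Lemma~\ref{l:4.2} and in spelling out the diameter reasoning, but the argument is the same.
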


\begin{proof}
Let $\ep>0$, $p\in\U$ and $\A:=\{B_p(x,\frac{\ep}2): x\in X\}$ be an open cover of $X$. Let $n\in\N$ and $S\subset X$ be a $(p,\ep,n)_{\CM}$-separated subset of $X$ for $\vp_{0,\infty}$. Consider $A_0,\ldots,A_{n-1} \in\A$ and $x,y\in\FF(\vp_{0,\infty};A_0,\ldots,A_{n-1})\cap S$. There exist orbits $\bar{x}\in\orbn(\vp_{0,\infty},x)$, $\bar{y}\in\orbn(\vp_{0,\infty},y)$ such that $\bar{x}_i,\bar{y}_i\in A_i$, $i=0,\ldots,n-1$. Thus, $p(\bar{x}_i,\bar{y}_i)<\ep$, $i=0,\ldots,n-1$, and $p_n^{\CM}(x,y)<\ep$. By the separation property of $S$, $x=y$.

In this way, $\card S \leq N(\FF(\vp_{0,\infty};\A^n))$. Furthermore, $s_{\CM}(\vp_{0,\infty},p,\ep,n)\leq N(\FF(\vp_{0,\infty};\A^n))$, which already leads to the demanded inequality $\hCM^{\sepp}(\vp_{0,\infty})\leq \hU(\vp_{0,\infty})$.
\end{proof}

Let $X$ be a compact topological space and $\vp_j: X\multimap X$, $j\in\N\cup\{0\}$, be a sequence $\vp_{0,\infty}$ of l.s.c. maps.
For an open cover $\A$ of $X$, $x\in X$ and $n\in\N$, let us put
\begin{align*}
&\A^n(\vp_{0,\infty},x):= \\
&\pi_1\{ \bar{y}\in\orbn(\vp_{0,\infty}): \exists \bar{x}\in\orbn(\vp_{0,\infty},x) \exists A_i\in\A , \bar{x}_i,\bar{y}_i\in A_i, i=0,\ldots,n-1\},\\
&\A^n(\vp_{0,\infty}):= \{\A^n(\vp_{0,\infty},x): x\in X\}.
\end{align*}

\begin{lemma}
Let $X$ be a compact topological space and $\vp_j: X\multimap X$, $j\in\N\cup\{0\}$, be a sequence $\vp_{0,\infty}$ of l.s.c. maps.
For an open cover $\A$ of $X$, $x\in X$ and $n\in\N$, the set $\A^n(\vp_{0,\infty},x)$ is nonempty and open, and $\A^n(\vp_{0,\infty})$ is an open cover of $X$.
\end{lemma}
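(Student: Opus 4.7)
The plan is to verify the three claims separately: nonemptiness of $\A^n(\vp_{0,\infty},x)$ and the covering property both follow from a single observation, while openness will be reduced to Lemma~\ref{l:4.2}.

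For nonemptiness and the cover property, I would show that $x\in\A^n(\vp_{0,\infty},x)$ for every $x\in X$. Since all the maps $\vp_j$ have nonempty values, one can iteratively select $x_{i+1}\in\vp_i(x_i)$ starting from $x_0=x$, obtaining an orbit $\bar{x}\in\orbn(\vp_{0,\infty},x)$. Because $\A$ covers $X$, each $\bar{x}_i$ lies in some $A_i\in\A$. Taking $\bar{y}:=\bar{x}$ (so that trivially $\bar{y}_i\in A_i$), the defining condition of $\A^n(\vp_{0,\infty},x)$ is met with $\pi_1(\bar{y})=x$. This simultaneously yields $\A^n(\vp_{0,\infty},x)\ne\emptyset$ and shows that $\A^n(\vp_{0,\infty})$ covers $X$.

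For openness, I would rewrite $\A^n(\vp_{0,\infty},x)$ as a union of sets of the form $\FF(\vp_{0,\infty};A_0,\ldots,A_{n-1})$, each of which is open by Lemma~\ref{l:4.2} (equivalently, by Lemma~\ref{l:4.1} combined with the fact that large preimages of l.s.c. maps are open). Unpacking the definition, $y\in\A^n(\vp_{0,\infty},x)$ holds if and only if there exist $\bar{x}\in\orbn(\vp_{0,\infty},x)$, $\bar{y}\in\orbn(\vp_{0,\infty},y)$ and $A_0,\ldots,A_{n-1}\in\A$ with $\bar{x}_i,\bar{y}_i\in A_i$ for $i=0,\ldots,n-1$. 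Given such a tuple $(A_0,\ldots,A_{n-1})$, the existence of $\bar{y}$ with $\bar{y}_0=y$ and $\bar{y}_i\in A_i$ is exactly the statement that $y\in\FF(\vp_{0,\infty};A_0,\ldots,A_{n-1})$. Letting $\mathcal{I}(x)$ denote the collection of tuples $(A_0,\ldots,A_{n-1})\in\A^n$ such that $\orbn(\vp_{0,\infty},x)\cap(A_0\times\cdots\times A_{n-1})\neq\emptyset$, I therefore obtain
\[
\A^n(\vp_{0,\infty},x) \;=\; \bigcup_{(A_0,\ldots,A_{n-1})\in\mathcal{I}(x)} \FF(\vp_{0,\infty};A_0,\ldots,A_{n-1}),
\]
a union of open sets, hence open.

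The only real obstacle is bookkeeping: making the above set equality watertight so that the two quantifier blocks (over $\bar{x}$ with $\bar{x}_i\in A_i$ on one side, and over $\bar{y}$ with $\bar{y}_i\in A_i$ on the other) are separated cleanly into an indexing condition on the union and the defining condition of $\FF$. Once the equality is established, openness follows instantly from the openness of each $\FF(\vp_{0,\infty};A_0,\ldots,A_{n-1})$, and no further argument --- in particular, no use of the compactness of $X$ --- is required for this part of the lemma.
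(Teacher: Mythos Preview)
Your proposal is correct and matches the paper's proof essentially line for line: the paper also shows $x\in\A^n(\vp_{0,\infty},x)$ directly from the definition (which you spell out in slightly more detail), and establishes openness via the very same identity
\[
\A^n(\vp_{0,\infty},x) = \bigcup_{(A_0\times\ldots\times A_{n-1})\in\A^{n},\; x\in\FF(\vp_{0,\infty};A_0,\ldots,A_{n-1})} \FF(\vp_{0,\infty};A_0,\ldots,A_{n-1}),
\]
noting that your indexing condition $(A_0,\ldots,A_{n-1})\in\mathcal{I}(x)$ is precisely $x\in\FF(\vp_{0,\infty};A_0,\ldots,A_{n-1})$.
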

\begin{proof}
For $x\in X$, $\A^n(\vp_{0,\infty},x)$ is an open set by means of Lemma~\ref{l:4.2}, jointly with the equality
\[
\A^n(\vp_{0,\infty},x) = \bigcup_{(A_0\times\ldots\times A_{n-1})\in\A^{n},\, x\in\FF(\vp_{0,\infty};A_0,\ldots,A_{n-1})} \FF(\vp_{0,\infty};A_0,\ldots,A_{n-1}).
\]

We deduce directly from the definition that $x\in\A^n(\vp_{0,\infty},x)$, by which \\$\A^n(\vp_{0,\infty},x)\neq\emptyset$, and at the same time $\A^n(\vp_{0,\infty})$ covers $X$.
\end{proof}
\begin{definition}\label{d:hL}
Let $X$ be a compact topological space and $\vp_j: X\multimap X$, $j\in\N\cup\{0\}$, be a sequence $\vp_{0,\infty}$ of l.s.c. maps. The \emph{topological entropy} $\hL(\vp,\A)$ of $\vp_{0,\infty}$ on the open cover $\A$ of $X$ takes the form
\begin{align*}
\hL(\vp_{0,\infty},\A) := \limsup_{n\to\infty} \frac1{n} \log N(\A^n(\vp_{0,\infty})),
\end{align*}
where $N(\A^n(\vp_{0,\infty}))$ stands for the minimal cardinality of a subcover of $\A^n(\vp_{0,\infty})$.

The \emph{topological (lower covering) entropy} $\hL(\vp_{0,\infty})$ of $\vp_{0,\infty}$ reads as
\begin{align*}
\hL(\vp_{0,\infty}) := \sup \{ \hL(\vp_{0,\infty},\A): \mbox{$\A$ is an open cover of $X$}\}.
\end{align*}

\end{definition}

\begin{prop}\label{p:4.3}
Let $(X,\U)$ be a compact uniform space and $\vp_{0,\infty}$ be a sequence of l.s.c. maps $\vp_j:X\multimap X$, $j\in\N\cup\{0\}$. Then the inequality
\[
\hL(\vp_{0,\infty})=\hCM^{\spa}(\vp_{0,\infty})
\]
holds for the topological entropies $\hL(\vp_{0,\infty})$, $\hCM^{\spa}(\vp_{0,\infty})$ of $\vp_{0,\infty}$, in the sense of Definitions~\ref{d:hL} and \ref{d:hCMspa}.
\end{prop}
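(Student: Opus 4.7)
The plan is to prove the equality by establishing both inequalities, matching the minimal spanning sets in the $\CM$ sense with the minimal subcovers of $\A^n(\vp_{0,\infty})$ via the uniformity/open-cover correspondence expressed by the Lebesgue covering lemma.

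\textbf{Step 1: $\hL(\vp_{0,\infty}) \leq \hCM^{\spa}(\vp_{0,\infty})$.} Fix an open cover $\A$ of $X$. By Lemma~\ref{l:Lebesgue} there exist $p\in\U$ and $\delta>0$ such that for every $z\in X$ one has $B_p(z,\delta)\subset A_z$ for some $A_z\in\A$. Set $\ep:=\delta/2$ and take a $(p,\ep,n)_{\CM}$-spanning set $R\subset X$ of minimal cardinality $r_{\CM}(\vp_{0,\infty},p,\ep,n)$. For every $x\in X$ pick $y\in R$ with $p_n^{\CM}(x,y)\leq\ep$; unfolding the definition, select orbits $\bar{x}\in\orbn(\vp_{0,\infty},x)$, $\bar{y}\in\orbn(\vp_{0,\infty},y)$ with $p(\bar{x}_i,\bar{y}_i)\leq\ep<\delta$ for $i=0,\ldots,n-1$. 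For each $i$ take $A_i\in\A$ with $B_p(\bar{x}_i,\delta)\subset A_i$; then $\bar{x}_i,\bar{y}_i\in A_i$, so $x\in\A^n(\vp_{0,\infty},y)$. Hence $\{\A^n(\vp_{0,\infty},y):y\in R\}$ is a subcover of $\A^n(\vp_{0,\infty})$, yielding $N(\A^n(\vp_{0,\infty}))\leq r_{\CM}(\vp_{0,\infty},p,\ep,n)$. Dividing by $n$, taking $\limsup$ and then the appropriate suprema gives $\hL(\vp_{0,\infty},\A)\leq \hCM^{\spa}(\vp_{0,\infty},p,\ep)\leq \hCM^{\spa}(\vp_{0,\infty})$, whence $\hL(\vp_{0,\infty})\leq \hCM^{\spa}(\vp_{0,\infty})$.

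\textbf{Step 2: $\hCM^{\spa}(\vp_{0,\infty}) \leq \hL(\vp_{0,\infty})$.} Fix $p\in\U$ and $\ep>0$, and take the open cover $\A:=\{B_p(z,\ep/2):z\in X\}$ of $X$. Let $\B$ be a subcover of $\A^n(\vp_{0,\infty})$ of minimal cardinality $N(\A^n(\vp_{0,\infty}))$, say $\B=\{\A^n(\vp_{0,\infty},y_k):k=1,\ldots,N\}$, and set $R:=\{y_1,\ldots,y_N\}$. For each $x\in X$ pick $y_k\in R$ with $x\in\A^n(\vp_{0,\infty},y_k)$: there exist $\bar{x}\in\orbn(\vp_{0,\infty},x)$, $\bar{y}\in\orbn(\vp_{0,\infty},y_k)$ and $A_i=B_p(z_i,\ep/2)\in\A$ with $\bar{x}_i,\bar{y}_i\in A_i$. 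The triangle inequality for $p$ gives $p(\bar{x}_i,\bar{y}_i)<\ep$ for every $i$, so $p_n^{\CM}(x,y_k)<\ep$. Thus $R$ is a $(p,\ep,n)_{\CM}$-spanning set, and $r_{\CM}(\vp_{0,\infty},p,\ep,n)\leq N(\A^n(\vp_{0,\infty}))$. Passing to the $\limsup$ and suprema delivers $\hCM^{\spa}(\vp_{0,\infty})\leq\hL(\vp_{0,\infty})$.

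\textbf{Anticipated difficulty.} The arguments are largely formal; the only subtle point is aligning the two different ways of comparing orbits (through a common cover element versus through a common pseudometric distance), which is exactly what the Lebesgue lemma makes possible in Step~1, and what the choice of the $\ep/2$-ball cover makes automatic in Step~2. Care must be taken with the quantifier structure of $\A^n(\vp_{0,\infty},y)$: the existence of the orbit $\bar{y}\in\orbn(\vp_{0,\infty},y)$ matching $\bar{x}$ within the cover elements is precisely what the $\CM$-pseudometric supplies via its infimum over orbits, so the l.s.c.\ hypothesis (used through Lemma~\ref{l:4.2} to guarantee openness and hence the very correctness of Definition~\ref{d:hL}) is employed only implicitly via the well-definedness of the involved objects.
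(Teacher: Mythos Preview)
Your proof is correct and follows essentially the same two-inequality strategy as the paper: Step~1 uses the Lebesgue covering lemma to turn a minimal $(p,\delta/2,n)_{\CM}$-spanning set into a subcover of $\A^n(\vp_{0,\infty})$, and Step~2 uses the ball cover $\A=\{B_p(z,\ep/2)\}$ to turn a minimal subcover back into a spanning set via the triangle inequality. The only cosmetic differences are the swapped roles of $x$ and $y$ in Step~1 and the minor sloppiness of writing $p(\bar{x}_i,\bar{y}_i)\leq\ep$ when the infimum defining $p_n^{\CM}$ need not be attained; since you only need the strict bound $<\delta$ this is harmless, and the paper glosses over the same point.
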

\begin{proof}
At first, we will prove the inequality $\hL(\vp_{0,\infty})\leq \hCM^{\spa}(\vp_{0,\infty})$. Let $\A$ be an open cover of $X$ with $p\in\U$, $\delta>0$, provided by Lemma~\ref{l:Lebesgue}. Let $R\subset X$ be an $(p,\frac{\delta}2,n)_{\CM}$-spanning set in $X$ for $\vp_{0,\infty}$.

For $x\in R$, we take $y\in X$ such that $p_n^{\CM}(x,y)\leq\frac{\delta}2$ (cf. Definition~\ref{d:hCMspa}). Then there exist $\bar{x},\bar{y}\in\orbn(\vp_{0,\infty})$ such that $p(\bar{x}_i,\bar{y}_i))<\delta$, i.e. $\bar{y}_i\in B_p(\bar{x}_i,\delta)$, for every $i=0,\ldots,n-1$. Furthermore, according to Lemma~\ref{l:Lebesgue}, there exist $A_i\in\A$ such that $B_p(\bar{x}_i,\delta)\subset A_i$, for every $i=0,\ldots,n-1$. Thus, $\bar{x}_i,\bar{y}_i\in A_i$, for every $i=0,\ldots,n-1$, and $y\in\A^n(\vp_{0,\infty},x)$. Since $\{\A^n(\vp_{0,\infty},x):x\in R\}$ is an open subcover of $\A^n(\vp_{0,\infty})$, we get $N(\A^n(\vp_{0,\infty}))\leq \card\{ \A^n(\vp_{0,\infty},x): x\in R\} \leq \card R$.

Moreover, $N(\A^n(\vp_{0,\infty}))\leq r_{\CM}(\vp_{0,\infty},p,\frac{\delta}2,n)$. Passing to the limit for $n\to\infty$ and taking suprema with respect to $p\in\U$ and $\delta>0$, we arrive at $\hL(\vp_{0,\infty})\leq \hCM^{\spa}(\vp_{0,\infty})$, as claimed.

Now, we will prove the reverse inequality $\hL(\vp_{0,\infty})\geq \hCM^{\spa}(\vp_{0,\infty})$. Let $\delta>0$, $p\in\U$ and $\A:=\{B_p(x,\frac{\delta}2):x\in X\}$.

For $n\in\N$, we take the subcover $\B$ of $\A^n(\vp_{0,\infty})$ with a (finite) minimal cardinality. Then there exists $R\subset X$ such that
\[
\B = \{\A^n(\vp_{0,\infty},x): x\in R\},
\]
$\card R = N(\A^n(\vp_{0,\infty}))$ and $\bigcup_{x\in R} \A^n(\vp_{0,\infty},x)=X$.

Letting $x\in R$ and $y\in\A^n(\vp_{0,\infty},x)$, there exist $\bar{x},\bar{y}\in\orbn(\vp_{0,\infty})$ such that $\bar{x}_i,\bar{y}_i\in A_i$ where $A_i\in\A$, $i=0,\ldots,n-1$. Since $p(\bar{x}_i,\bar{y}_i)<\delta$, for all $i=0,\ldots,n-1$, in view of Definition~\ref{d:3.5}, it holds
\[
p_n^{\CM}(x,y)<\delta.
\]

For every $y\in X$, there exists $x\in R$ such that $y\in\A^n(\vp_{0,\infty},x)$. Therefore, $p_n^{\CM}(x,y)<\delta$, and $R$ is a $(p,\delta,n)_{\CM}$-spanning in $X$ for $\vp_{0,\infty}$, by which
\[
r_{\CM}(\vp_{0,\infty},p,\delta,n)\leq N(\A^n(\vp_{0,\infty})).
\]

After all, 
\begin{align*}
\limsup_{n\to\infty} \frac1{n} \log r_{\CM}(\vp_{0,\infty},p,\delta,n) &\leq \limsup_{n\to\infty} \frac1{n} \log N(\A^n(\vp_{0,\infty}))\\
& = \hL(\vp_{0,\infty},\A) \leq \hL(\vp_{0,\infty}).
\end{align*}
Taking suprema over $\delta>0$, $p\in\U$, we arrive at the desired inequality
\[
\hCM^{\spa}(\vp_{0,\infty})\leq \hL(\vp_{0,\infty}).
\]

\end{proof}

Summing up, we can formulate the following theorem for l.s.c. maps.

\begin{theorem}\label{t:4.1}
Let $X$ be a compact Hausdorff space and $\vp_{0,\infty}$ be a sequence of l.s.c. multivalued maps $\vp_j:X\multimap X$, $j\in\N\cup\{0\}$. Then the following relations
\begin{align*}
\hL(\vp_{0,\infty}) = \hCM^{\spa}(\vp_{0,\infty}) \leq \hCM^{\sepp}(\vp_{0,\infty}) \leq \hU(\vp_{0,\infty}) \leq \hKT(\vp_{0,\infty})
\end{align*}
hold for the parametric topological entropies of $\vp_{0,\infty}$, in the sense of Definitions~\ref{d:hL}, \ref{d:hCMspa}, \ref{d:hCMsep}, \ref{d:hU} and \ref{d:hKT}, respectively.
\end{theorem}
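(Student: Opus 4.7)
The statement is a chain of four relations, and the plan is simply to assemble it from the propositions and lemma already established in Section 4 (together with Lemma~\ref{l:hCMspasep} from Section 3). No new argument is needed; the work is to identify which earlier result delivers each link in the chain.

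First I would handle the leftmost equality $\hL(\vp_{0,\infty}) = \hCM^{\spa}(\vp_{0,\infty})$, which is exactly the content of Proposition~\ref{p:4.3} under the standing hypotheses (compact Hausdorff $X$ is automatically a compact uniform space with respect to its unique compatible uniformity, so the proposition applies). Next, the inequality $\hCM^{\spa}(\vp_{0,\infty}) \leq \hCM^{\sepp}(\vp_{0,\infty})$ is Lemma~\ref{l:hCMspasep}, which is valid for arbitrary multivalued maps and hence in particular for l.s.c.\ ones. The inequality $\hCM^{\sepp}(\vp_{0,\infty}) \leq \hU(\vp_{0,\infty})$ is Proposition~\ref{p:4.2}, which requires the l.s.c.\ hypothesis (used to guarantee $\FF(\vp_{0,\infty};\A^n)$ is an open cover). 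Finally, $\hU(\vp_{0,\infty}) \leq \hKT(\vp_{0,\infty})$ is Proposition~\ref{p:4.1}, again using l.s.c.

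There is no genuine obstacle here, since every ingredient is already in place; the only thing to verify is that the hypotheses of Propositions~\ref{p:4.1}, \ref{p:4.2}, \ref{p:4.3} (compact uniform space, sequence of l.s.c.\ maps) are implied by our assumptions (compact Hausdorff space, sequence of l.s.c.\ maps). This is immediate from the discussion at the beginning of Section~2, where it is recalled that every compact Hausdorff space carries a unique compatible uniformity. Concatenating the four relations in order then yields the displayed chain, which completes the proof.
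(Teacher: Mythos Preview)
Your proposal is correct and matches the paper's own proof essentially verbatim: the paper simply states that the chain follows directly from Lemma~\ref{l:hCMspasep} and Propositions~\ref{p:4.1}--\ref{p:4.3}, which is precisely the decomposition you describe. Your additional remark about compact Hausdorff spaces being compact uniform spaces is a helpful clarification that the paper leaves implicit.
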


\begin{proof}
The statement follows directly from Lemma~\ref{l:hCMspasep} and Propositions~\ref{p:4.1}-\ref{p:4.3}.
\end{proof}

Now, we will turn to u.s.c. multivalued maps. As already documented in \cite[Remark 5.4]{AL4}, there is practically no need to verify the correctness of definitions of parametric topological entropy of u.s.c. maps with compact values.

The following definition of parametric branch entropy generalizes its analog e.g. in \cite{Hu}, where  it was defined for the first time for the inversions of autonomous single-valued maps and in \cite[Definition 2.3]{WZZ} for autonomous u.s.c. maps in compact metric spaces.

Taking the pseudometric $p_n^b$ on $X$ as
\begin{align}\label{eq:pnb}
p^b_n(x,y)&:= \pnH(\orbn(\vp_{0,\infty},x),\orbn(\vp_{0,\infty},y))\\
&=\max\{\sup_{\overline{x}\in\orbn(\vp_{0,\infty},x)} \inf_{\overline{y}\in\orbn(\vp_{0,\infty},y)} p_n(\overline{x},\overline{y}), \nonumber\\ 
&\sup_{\overline{y}\in\orbn(\vp_{0,\infty},y)} \inf_{\overline{x}\in\orbn(\vp_{0,\infty},x)} p_n(\overline{y},\overline{x})\},\nonumber
\end{align}
where $p_n\in\U^n$ is defined by \eqref{eq:0} and $\pnH$ along the lines of Proposition~\ref{p:hyper2}, we can generalize the branch entropy as follows.

\begin{definition}\label{d:hi}
Let $(X,\U)$ be a compact uniform space and $\vp_j:X\to\KK(X)$, $j\in\N\cup\{0\}$, be a sequence $\vp_{0,\infty}$ of u.s.c. multivalued maps. Denoting by $s_{\mathfrak{i}}(\vp_{0,\infty},p^b_n,\ep)$ the maximum of cardinalities of $(p^b_n,\ep)$-separated sets of $X$, resp. by $r_{\mathfrak{i}}(\vp_{0,\infty},p^b_n,\ep)$ the smallest cardinality of a $(p^b_n,\ep)$-spanning set in $X$, the \emph{parametric branch entropy} $\hi(\vp_{0,\infty})$ is defined to be
\begin{align*}
\hi(\vp_{0,\infty})&:= \sup_{p\in\U,\ep>0} \limsup_{n\to\infty} \frac1{n} \log s_{\mathfrak{i}}(\vp_{0,\infty},p^b_n,\ep)\\
&= \sup_{p\in\U,\ep>0} \limsup_{n\to\infty} \frac1{n} \log r_{\mathfrak{i}}(\vp_{0,\infty},p^b_n,\ep),
\end{align*}
\end{definition}

Let us also recall the following definitions in \cite[Definitions 7 and 8]{AL2}, which will play an important role in the next section.

\begin{definition}[Via separated sets]\label{d:hHse}
Let $(X,\U)$ be a compact uniform space and $\vp_j\colon X\to\KK(X)$, $j\in\N\cup\{0\}$, be a sequence $\vp_{0,\infty}$ of u.s.c. multivalued  maps. A set $S\subset X$ is called \emph{$(p,\ep,n)$-separated} for $\vp_{0,\infty}$, for a positive integer $n\in\N$, $\ep>0$ and $p\in\U$, if for every pair of distinct points $x,y\in S$, $x\neq y$, there is at least one $k$ with $0\leq k < n$ such that
\[
p^{\H}(\vp^{[k]}(x),\vp^{[k]}(y)) > \ep,
\]
where (cf. Proposition~\ref{p:hyper2}) 
\begin{align*}
p^{\H}(A,B) &:= \max \{ \sup_{a\in A} p(a,B), \sup_{b\in B} p(A,b)\}, \mbox{ $A,B \in \KK(X)$,}\\
\vp^{[k]}&:= \vp_{k-1} \circ \ldots \circ \vp_0, \mbox{ for $k>0$, and } \vp^{[0]}:= \id_X.
\end{align*}
Let $s_{\H}(\vp_{0,\infty},p,\ep,n)$ denote the largest cardinality of a $(p,\ep,n)$-separated subset of $X$ with respect to $\vp_{0,\infty}$, i.e.
\[
s_{\H}(\vp_{0,\infty},p,\ep,n):= \max \{ \card S: \mbox{ $S\subset X$ is a $(p,\ep,n)$-separated set for $\vp$}\}.
\]

Then the \emph{topological entropy} $\hHse(\vp_{0,\infty})$ of $\vp_{0,\infty}$ is defined as
\begin{align*}\label{e:hHse}
\hHse(\vp_{0,\infty})&:= \sup_{p\in \U,\ep>0} s_{\H}(\vp_{0,\infty},p,\ep),\\ 
\mbox{ where } s_{\H}(\vp_{0,\infty},p,\ep)&:= \limsup_{n\to\infty} \frac1n \log s_{\H}(\vp_{0,\infty},p,\ep,n).\nonumber
\end{align*}
\end{definition}

\begin{remark}\label{r:5}
Even for $\vp_j=\vp$, $j\in\N\cup\{0\}$, Definition~\ref{d:hHse} is new. In compact metric spaces, the nonparametric version of Definition~\ref{d:hHse} reduces to \cite[Definition 9]{AL1}. For single-valued maps $\vp_j\colon X\to X$, $j\in\N\cup\{0\}$, the corresponding definition to Definition \ref{d:hHse} was given in \cite[Section 2.2]{Sh}. This correspondence might not be, for the first glance, evident because of different (though equivalent) definitions of a uniform structure involved in Definition \ref{d:hHse} and the one in \cite[Section 2.2]{Sh}.
\end{remark}

\begin{definition}[Via spanning sets]\label{d:hHsp}
Let $(X,\U)$ be a compact uniform space and $\vp_j\colon X\to\KK(X)$, $j\in\N\cup\{0\}$, be a sequence $\vp_{0,\infty}$ of u.s.c. multivalued  maps. A set $R\subset X$ is called \emph{$(p,\ep,n)$-spanning} for $\vp_{0,\infty}$, for a positive integer $n\in\N$, $\ep>0$ and $p\in\U$, if for every $x\in X$ there is $y\in R$ such that (cf. Definition~\ref{d:hHse})
\[
p^H(\vp^{[k]}(x),\vp^{[k]}(y))\leq \ep \mbox{, for every $0\leq k <n$}.
\]

Let $r_{\H}(\vp_{0,\infty},p,\ep,n)$ denote the least cardinality of a $(p,\ep,n)$-spanning subset of $X$ with respect to $\vp_{0,\infty}$, i.e.
\[
r_{\H}(\vp_{0,\infty},p,\ep,n):= \min \{ \card R: \mbox{ $R\subset X$ is a $(p,\ep,n)$-spanning set for $\vp$}\}.
\]

Then the \emph{topological entropy} $\hHsp(\vp_{0,\infty})$ of $\vp_{0,\infty}$ is defined as
\begin{align*}\label{e:hHsp}
\hHsp(\vp_{0,\infty})&:= \sup_{p\in \U,\ep>0} r_{\H}(\vp_{0,\infty},p,\ep),\\
\mbox{ where } r_{\H}(\vp_{0,\infty},p,\ep)&:= \limsup_{n\to\infty} \frac1n \log r_{\H}(\vp_{0,\infty},p,\ep,n).\nonumber
\end{align*}
\end{definition}

Because of the equality (cf. \cite[Proposition 8]{AL2})
\[
\hH^{\sepp}(\vp_{0,\infty}) = \hH^{\spa}(\vp_{0,\infty}),
\]
we can define the parametric topological entropy $\hH(\vp_{0,\infty})$ for u.s.c. multivalued maps $\vp_j:X\to\KK(X)$, $j\in\N\cup\{0\}$, in a compact Hausdorff space as follows.

\begin{definition}\label{d:hH}
Let $(X,\U)$ be a compact uniform space and $\vp_j\colon X\to\KK(X)$, $j\in\N\cup\{0\}$, be a sequence $\vp_{0,\infty}$ of u.s.c. multivalued  maps. Then
\[
\hH(\vp_{0,\infty}) := \hH^{\sepp}(\vp_{0,\infty}) =\hH^{\spa}(\vp_{0,\infty}).
\]
\end{definition}

\begin{prop}\label{p:4.4}
Let $X$ be a compact Hausdorff space and $\vp_j:X\to\KK(X)$, $j\in\N\cup \{0\}$, be a sequence $\vp_{0, \infty}$ of u.s.c. multivalued maps. The inequality
 \begin{align}\label{eq:p:4.4}
\hH(\vp_{0,\infty}) \leq \hi(\vp_{0,\infty})
 \end{align}
holds for the parametric topological entropies $\hH(\vp_{0,\infty})$, $\hi(\vp_{0,\infty})$ of $\vp_{0,\infty}$, in the sense of Definitions~\ref{d:hH} and \ref{d:hi}.
\end{prop}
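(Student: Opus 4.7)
The plan is to compare the pseudometric $p^b_n$ on $X$ used in Definition~\ref{d:hi} with the quantities $p^{\H}(\vp^{[k]}(x), \vp^{[k]}(y))$ appearing in Definition~\ref{d:hHse}. Concretely, I would prove the pointwise inequality
\begin{equation*}
p^b_n(x,y) \geq \max_{0\leq k\leq n-1} p^{\H}(\vp^{[k]}(x), \vp^{[k]}(y)), \quad x,y \in X,\ n\in\N,
\end{equation*}
and then transport it to separated sets. The key structural fact is that, for every $0 \leq k \leq n-1$, the projection of $\orbn(\vp_{0,\infty},x)$ onto its $k$-th coordinate coincides with $\vp^{[k]}(x)$: every $a\in\vp^{[k]}(x)$ is realized as $\bar{x}_k$ of some $n$-orbit starting at $x$ (choose a chain of witnesses from $0$ to $k$, then extend past $k$ using that all $\vp_j$ take nonempty values).

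Granting the pointwise inequality, the rest is immediate. If $S\subset X$ is $(p,\ep,n)$-separated for $\vp_{0,\infty}$ in the sense of Definition~\ref{d:hHse}, then for each pair of distinct $x,y\in S$ there exists $k$ with $p^{\H}(\vp^{[k]}(x), \vp^{[k]}(y)) > \ep$, hence $p^b_n(x,y) > \ep$. Therefore $S$ is $(p^b_n,\ep)$-separated for $\vp_{0,\infty}$, which gives $s_{\H}(\vp_{0,\infty},p,\ep,n) \leq s_{\mathfrak{i}}(\vp_{0,\infty},p^b_n,\ep)$. Applying $\limsup_{n\to\infty}\tfrac{1}{n}\log(\cdot)$ and then $\sup_{p\in\U,\ep>0}$ yields $\hH^{\sepp}(\vp_{0,\infty}) \leq \hi(\vp_{0,\infty})$, and Definition~\ref{d:hH} then gives \eqref{eq:p:4.4}.

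The core step is the pointwise inequality. Fix $k$ and $a\in\vp^{[k]}(x)$, and choose $\bar{x}\in\orbn(\vp_{0,\infty},x)$ with $\bar{x}_k = a$. For any $\bar{y}\in\orbn(\vp_{0,\infty},y)$ one has $p_n(\bar{x},\bar{y}) \geq p(\bar{x}_k,\bar{y}_k) = p(a,\bar{y}_k)$. Since $\{\bar{y}_k : \bar{y}\in\orbn(\vp_{0,\infty},y)\} = \vp^{[k]}(y)$, taking the infimum over $\bar{y}$ yields
\begin{equation*}
\inf_{\bar{y}\in\orbn(\vp_{0,\infty},y)} p_n(\bar{x},\bar{y}) \geq \inf_{b\in\vp^{[k]}(y)} p(a,b).
\end{equation*}
Taking the supremum over $a\in\vp^{[k]}(x)$ (equivalently, over such $\bar{x}$) controls the first term in the Hausdorff expression for $p^b_n(x,y)$ by $\sup_{a\in\vp^{[k]}(x)}\inf_{b\in\vp^{[k]}(y)} p(a,b)$; the symmetric bound follows identically by swapping the roles of $x$ and $y$. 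Taking the maximum of the two halves gives $p^b_n(x,y) \geq p^{\H}(\vp^{[k]}(x),\vp^{[k]}(y))$, and then the maximum over $k$.

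The main (mild) obstacle is the bookkeeping around the identification of coordinate projections of $\orbn(\vp_{0,\infty},x)$ with $\vp^{[k]}(x)$ together with the two-sided nature of the Hausdorff distance; once these are in place, the inequality on separated-set cardinalities and the subsequent passage to entropies are routine.
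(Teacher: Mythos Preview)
Your proposal is correct and follows essentially the same route as the paper: both establish the pointwise inequality $p^b_n(x,y)\geq \max_{0\leq k<n} p^{\H}(\vp^{[k]}(x),\vp^{[k]}(y))$ and then pass to separated sets. The only cosmetic difference is that the paper packages the core estimate as a formal $\max$/$\min$ interchange (citing a general lemma), whereas you unfold it by hand via the surjectivity of the $k$-th coordinate projection $\orbn(\vp_{0,\infty},x)\to\vp^{[k]}(x)$; the content is the same.
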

\begin{proof}
Let $p\in\U$, $n\in\N$ and $x,y\in X$. Then, applying \cite[Lemma 36.1]{Ro} (see also \eqref{eq:pnb}), we obtain
\begin{align*}
p_n^b(x,y) &= p_n^{\H}(\orbn(x),\orbn(y)) \\
 &= \max\{ \max_{\bar{x}\in\orbn(\vp_{0,\infty},x)} \min_{\bar{y}\in\orbn(\vp_{0,\infty},y)} \max_{0\leq k<n} p(\bar{x}_k,\bar{y}_k),\\
 &\max_{\bar{y}\in\orbn(\vp_{0,\infty},y)} \min_{\bar{x}\in\orbn(\vp_{0,\infty},x)} \max_{0\leq k<n} p(\bar{x}_k,\bar{y}_k)\}\\
 &\geq \max\{ \max_{0\leq k<n} \max_{\bar{x}\in\orbn(\vp_{0,\infty},x)} \min_{\bar{y}\in\orbn(\vp_{0,\infty},y)}  p(\bar{x}_k,\bar{y}_k),\\
 &\max_{0\leq k<n}\max_{\bar{y}\in\orbn(\vp_{0,\infty},y)} \min_{\bar{x}\in\orbn(\vp_{0,\infty},x)}  p(\bar{x}_k,\bar{y}_k)\}\\
&= \max_{0\leq k<n} \max\{\max_{\bar{x}\in\orbn(\vp_{0,\infty},x)} \min_{\bar{y}\in\orbn(\vp_{0,\infty},y)}  p(\bar{x}_k,\bar{y}_k),\\
 &\max_{\bar{y}\in\orbn(\vp_{0,\infty},y)} \min_{\bar{x}\in\orbn(\vp_{0,\infty},x)}  p(\bar{x}_k,\bar{y}_k)\}\\
&= \max_{0\leq k <n} \pH(\vp^{[k]}(x),\vp^{[k]}(y)).
\end{align*}
Thus, a usual inspection of Definitions~\ref{d:hH} and~\ref{d:hi} leads to the conclusion \eqref{eq:p:4.4}.

\end{proof}

Summing up, we can formulate the following theorem for u.s.c. maps.

\begin{theorem}\label{t:4.2}
Let $X$ be a compact Hausdorff space, $\vp_{0,\infty}$ be a sequence of u.s.c. multivalued maps $\vp_j:X\to\KK(X)$, $j\in\N\cup\{0\}$. Then the following inequalities
 \[
\hrsp(\vp_{0,\infty}) \leq \hrse(\vp_{0,\infty}) \leq \hH(\vp_{0,\infty})\leq \hi(\vp_{0,\infty})
 \]
hold for the parametric topological entropies of $\vp_{0,\infty}$, in the sense of Definitions~\ref{d:hrsp}, \ref{d:hrse}, \ref{d:hH}, \ref{d:hi}, respectively.
\end{theorem}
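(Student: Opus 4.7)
The plan is to chain three inequalities, two of which are already established earlier in the paper, and to insert a short direct comparison argument for the middle one.

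First, the leftmost inequality $\hrsp(\vp_{0,\infty}) \leq \hrse(\vp_{0,\infty})$ is precisely the content of Lemma~\ref{l:3.5} applied to the sequence $\vp_{0,\infty}$; no additional argument is required, since u.s.c. maps with compact values are in particular multivalued maps in a compact Hausdorff space. The rightmost inequality $\hH(\vp_{0,\infty}) \leq \hi(\vp_{0,\infty})$ is Proposition~\ref{p:4.4}, which uses the formula for $p_n^b$ from \eqref{eq:pnb} together with the min-max/max-min inequality.

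The middle inequality $\hrse(\vp_{0,\infty}) \leq \hH(\vp_{0,\infty})$ follows from the pointwise comparison
\begin{equation*}
p^\rho(A,B) \leq \pH(A,B), \quad A,B\in\KK(X),\ p\in\U,
\end{equation*}
which is immediate from the definitions: $p^\rho(A,B)=\inf\{p(a,b):a\in A, b\in B\}$ is bounded above by $\inf_{b\in B} p(a,b)$ for any fixed $a\in A$, and hence by $\sup_{a\in A}\inf_{b\in B} p(a,b)\leq \pH(A,B)$. Therefore, if $S\subset X$ is $(p,\ep,n)_\rho$-separated for $\vp_{0,\infty}$ in the sense of Definition~\ref{d:hrse}, then for every pair of distinct points $x,y\in S$ there is $k\in\{0,\dots,n-1\}$ with $\pH(\vp^{[k]}(x),\vp^{[k]}(y))\geq p^\rho(\vp^{[k]}(x),\vp^{[k]}(y)) > \ep$, so that $S$ is also $(p,\ep,n)$-separated in the sense of Definition~\ref{d:hHse}. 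Consequently $s_\rho(\vp_{0,\infty},p,\ep,n)\leq s_{\H}(\vp_{0,\infty},p,\ep,n)$, and after applying $\limsup_{n\to\infty}\frac{1}{n}\log(\cdot)$ and taking the supremum over $p\in\U$, $\ep>0$, one obtains $\hrse(\vp_{0,\infty}) \leq \hHse(\vp_{0,\infty}) = \hH(\vp_{0,\infty})$, where the last equality is Definition~\ref{d:hH}.

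Combining the three inequalities yields the claim. I do not anticipate a serious obstacle here: both endpoints are directly cited, and the only new ingredient is the trivial estimate $p^\rho\leq p^{\H}$, which immediately implies the set-theoretic inclusion of separated sets and hence the desired ordering of the maximal cardinalities.
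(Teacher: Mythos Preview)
Your proof is correct and follows essentially the same structure as the paper's: Lemma~\ref{l:3.5} for the first inequality, Proposition~\ref{p:4.4} for the last, and the comparison $p^\rho\leq p^{\H}$ for the middle one. The only difference is that the paper cites \cite[Theorem~3]{AL2} for $\hrse(\vp_{0,\infty})\leq\hH(\vp_{0,\infty})$, whereas you supply the short direct argument via the inclusion of $(p,\ep,n)_\rho$-separated sets among $(p,\ep,n)$-separated sets.
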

\begin{proof}
The statement follows directly from Lemma~\ref{l:3.5}, Proposition~\ref{p:4.4} and \cite[Theorem 3]{AL2}.
\end{proof}

\begin{remark}\label{r:4.3}
Since for a sequence $f_{0,\infty}$ of continuous single-valued maps $f_j:X\to X$, $j\in\N\cup\{0\}$, in a compact Hausdorff space $X$, all the definitions of topological entropy considered in Theorems~\ref{t:3.1}, \ref{t:4.1} and \ref{t:4.2} coincide, we can define again the parametric topological entropy $h(f_{0,\infty})$ of $f_{0,\infty}$, this time, as
\begin{align*}\label{eq:r:4.3}
h(f_{0,\infty})&:= \hKT(f_{0,\infty}) = \hU(f_{0,\infty})=\hCM^{\sepp}(f_{0,\infty}) = \hCM^{\spa}(f_{0,\infty})\\
&=\hL(f_{0,\infty}) = \hi(f_{0,\infty}) = \hH(f_{0,\infty}) = \hrse(f_{0,\infty}) = \hrsp(f_{0,\infty}). \nonumber
\end{align*}

In this way, such a definition of $h(f_{0,\infty})$ is equivalent with those in \cite[Section 2]{Sh} and \cite[Section 1]{KS}, where, in the latter case, the compact topological space $X$ can be even quite arbitrary.
\end{remark}

\section{Topological entropy for discontinuous induced hypermaps}
In this section, we will investigate for the first time the relationship between topological entropy of u.s.c. maps with compact values in a compact Hausdorff space $X$ and the one of the induced possibly discontinuous hypermaps in the corresponding hyperspace $\KK(X)$. This is possible thanks to Lemma~\ref{l:2.2} and Propositions~\ref{p:hyper2} and~\ref{p:2.3}, by which we have a single-valued $\vp^*:\KK(X)\to\KK(X)$, where $\vp^*(K):=\bigcup_{x\in K}\vp(x)$, for $K\in\KK(X)$, in the compact Hausdorff hyperspace $\KK(X)$, endowed with the Vietoris topology.

Hence, for the sequence $\vp_{0,\infty}$ of multivalued u.s.c. maps $\vp_j:X\to\KK(X)$, $j\in\N\cup\{0\}$, in a compact Hausdorff space $X$, we can consider the related sequence $\vp^*_{0,\infty}:=\{\vp^*_j\}_{j=0}^\infty$, where $(\vp^*)^{[k]}:=\vp_{k-1}^*\circ\ldots\circ\vp^*_0$, for all $k\in\N$, and $(\vp^*)^{[0]}:=\id\restr{\KK(X)}$.

The crucial related inequality reads as follows.
\begin{prop}\label{p:5.1}
Let $(X,\U)$ be a compact uniform space and $\vp_{0,\infty}$ be a sequence of u.s.c. maps $\vp_j\colon X\to\KK(X)$, $j\in\N\cup\{0\}$. Then the inequality
\[
\hH(\vp_{0,\infty}) \leq h(\vp_{0,\infty}^*)
\]
holds for the topological entropies $\hH(\vp_{0,\infty})$ of $\vp_{0,\infty}$ in the sense of Definition~\ref{d:hH} and $h(\vp^*_{0,\infty})$ of the sequence $\vp_{0,\infty}^*$ of the induced hypermaps $\vp^*_j:\KK(X)\to\KK(X)$, $j\in\N\cup\{0\}$, which can be defined in any way by means of the equalities \eqref{eq:r:3.7} in Remark~\ref{r:3.7}.
\end{prop}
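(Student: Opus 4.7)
The plan is to use the natural embedding $X\hookrightarrow\KK(X)$ given by $x\mapsto\{x\}$ to transport a $(p,\ep,n)$-separated set for $\vp_{0,\infty}$ (in the sense of Definition~\ref{d:hHse}) to a separated set of the same cardinality in $\orb{n}(\vp_{0,\infty}^*)$. Since $h(\vp_{0,\infty}^*)$ may be evaluated as $\hKT(\vp_{0,\infty}^*)$ by Remark~\ref{r:3.7}, and $\hH(\vp_{0,\infty})=\hHse(\vp_{0,\infty})$ by Definition~\ref{d:hH}, this will immediately yield the claim.

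The key ingredient is the identity $(\vp^*)^{[k]}(\{x\})=\vp^{[k]}(x)$ for all $x\in X$ and $k\in\N$, which I would verify by a short induction using $\vp_j^*(A)=\bigcup_{a\in A}\vp_j(a)$. Combined with the single-valuedness of $\vp_j^*\colon\KK(X)\to\KK(X)$, this shows that, for each $x\in X$, the set $\orb{n}(\vp_{0,\infty}^*,\{x\})$ is a singleton, consisting of the sequence $(\{x\},\vp^{[1]}(x),\ldots,\vp^{[n-1]}(x))$.

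Next, I would fix $p\in\U$, $\ep>0$, $n\in\N$ and an optimal $(p,\ep,n)$-separated set $S\subset X$ for $\vp_{0,\infty}$, and form
\[
\tilde S:=\{(\{x\},\vp^{[1]}(x),\ldots,\vp^{[n-1]}(x)):x\in S\}\subset\orb{n}(\vp_{0,\infty}^*).
\]
For distinct $x,y\in S$, Definition~\ref{d:hHse} furnishes an index $0\le k<n$ with $\pH(\vp^{[k]}(x),\vp^{[k]}(y))>\ep$, which, by the key identity, is just $\pH((\vp^*)^{[k]}(\{x\}),(\vp^*)^{[k]}(\{y\}))>\ep$. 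Hence $\tilde S$ is a $(\pH,\ep,n)_{\KT}$-separated subset of $\orb{n}(\vp_{0,\infty}^*)$ in the sense of Definition~\ref{d:3.2}, with $\card\tilde S=\card S=s_{\H}(\vp_{0,\infty},p,\ep,n)$. Since $p$ is bounded on the compact space $X$, the pseudometric $\pH$ belongs to the uniformity $\U^{\mathrm{H}}$ of $\KK(X)$ by Proposition~\ref{p:hyper2}, so
\[
s_{\H}(\vp_{0,\infty},p,\ep,n)\le s_{\KT}(\vp_{0,\infty}^*,\pH,\ep,n).
\]
Passing to $\limsup_{n\to\infty}\frac1n\log(\cdot)$ and taking suprema over $p\in\U$ and $\ep>0$ then gives $\hHse(\vp_{0,\infty})\le\hKT(\vp_{0,\infty}^*)=h(\vp_{0,\infty}^*)$, as required.

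No serious obstacle is expected: the possibly discontinuous nature of $\vp_j^*$ is harmless because the $\KT$-style definition imposes no continuity assumption, and, once the iteration identity $(\vp^*)^{[k]}(\{x\})=\vp^{[k]}(x)$ is recorded, the transport of the separation structure from $X$ to $\KK(X)$ is essentially tautological.
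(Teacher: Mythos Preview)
Your proposal is correct and follows essentially the same route as the paper: the paper also uses the natural inclusion $i\colon X\to\KK(X)$, $x\mapsto\{x\}$, records the identity $\vp^{[k]}(z)=(\vp^*)^{[k]}(i(z))$, and then observes that the image of a $(p,\ep,n)_{\H}$-separated set under $i$ is $(\pH,\ep,n)$-separated for $\vp_{0,\infty}^*$. Your write-up is somewhat more explicit (e.g., spelling out that $p$ is bounded so $\pH\in\U^{\mathrm H}$, and working overtly in $\orb{n}(\vp_{0,\infty}^*)$ in the $\KT$ framework), but the argument is the same.
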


\begin{proof}
Since $(X,\U)$ is a compact uniform space, $(\KK(X),\U^{H})$ is also a compact uniform space (see Proposition~\ref{p:hyper2}). Let $p\in\U$ and let $S\subset X$ be a $(p,\ep,n)_{H}$-separated for $\vp_{0,\infty}$, i.e. for each $x,y\in S$, $x\neq y$,
\[
\max_{0\leq k< n} p^{\H}(\vp^{[k]}(x),\vp^{[k]}(y)) > \ep.
\]

One can simply verify that, for every $z\in X$,
\[
\vp^{[k]}(z) = (\vp^*)^{[k]}(i(z)),
\]
where $i:X\to\KK(X)$ is a natural inclusion. Therefore,
\[
\max_{0\leq k< n} p^{\H}\left((\vp_{0,\infty}^*)^{[k]}(i(x)),(\vp_{0,\infty}^*)^{[k]}(i(y))\right) > \ep,
\]
and $i(E)\subset\KK(X)$ is a $(p^{\H},\ep,n)$-separated set. Using the standard arguments, we can conclude that $h(\vp_{0,\infty}^*) \geq h_{\H}(\vp_{0,\infty})$.
\end{proof}

Summing up Theorem~\ref{t:4.2} and Proposition~\ref{p:5.1}, we arrive at the fourth main theorem.
\begin{theorem}\label{t:5.2}
Let $(X,\U)$ be a compact uniform space and $\vp_{0,\infty}$ be a sequence of u.s.c. maps $\vp_j\colon X\to\KK(X)$, $j\in\N\cup\{0\}$. Then the following inequalities
\begin{align*}
\hrsp(\vp_{0,\infty}) \leq \hrse(\vp_{0,\infty}) \leq \hH(\vp_{0,\infty}) \leq h(\vp_{0,\infty}^*)
\end{align*}
hold for the parametric topological entropies $\hrsp(\vp_{0,\infty})$, $\hrse(\vp_{0,\infty})$, $\hH(\vp_{0,\infty})$ of $\vp_{0,\infty}$, in the sense of Definitions~\ref{d:hrsp}, \ref{d:hrse} and \ref{d:hH}, respectively, and $h(\vp^*_{0,\infty})$ of $\vp^*_{0,\infty}$, with regard to \eqref{eq:r:3.7} in Remark~\ref{r:3.7}.
\end{theorem}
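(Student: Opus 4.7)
The plan is essentially to assemble the chain of inequalities by invoking the two previously established results, since Theorem~\ref{t:5.2} is explicitly flagged as a summary of Theorem~\ref{t:4.2} and Proposition~\ref{p:5.1}. The only care needed is to line up the statements correctly and note that, although $\vp_{0,\infty}^*$ is a sequence of possibly discontinuous single-valued hypermaps on the compact Hausdorff hyperspace $(\KK(X),\tv)$, the quantity $h(\vp_{0,\infty}^*)$ is unambiguously defined thanks to Remark~\ref{r:3.7}, which gives the coincidence of all five parametric entropies from Theorem~\ref{t:3.1} for arbitrary single-valued sequences in compact Hausdorff spaces. Propositions~\ref{p:hyper2} and \ref{p:2.3} guarantee that $(\KK(X),\U^{\mathrm{H}})$ is indeed a compact uniform (Hausdorff) space, so the required hypotheses for \eqref{eq:r:3.7} are met.

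First I would recall Theorem~\ref{t:4.2}, which yields the leftmost portion of the chain, namely
\[
\hrsp(\vp_{0,\infty}) \leq \hrse(\vp_{0,\infty}) \leq \hH(\vp_{0,\infty}).
\]
Next I would invoke Proposition~\ref{p:5.1} to obtain the final link
\[
\hH(\vp_{0,\infty}) \leq h(\vp_{0,\infty}^*),
\]
which already uses precisely the compact uniform hypothesis on $X$ and the u.s.c. compact-valued assumption on each $\vp_j$ in order to pass from $(p,\ep,n)$-separated sets for $\vp_{0,\infty}$ in $X$ to $(p^{\mathrm{H}},\ep,n)$-separated sets for $\vp_{0,\infty}^*$ in $\KK(X)$, through the natural inclusion $i\colon X\to\KK(X)$.

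Concatenating these two displays gives the asserted chain. There is no real obstacle; the only subtle point worth flagging in the write-up is the well-definedness of $h(\vp_{0,\infty}^*)$: one has to remark that, regardless of which of the five equivalent definitions from Remark~\ref{r:3.7} is chosen for the single-valued sequence $\vp_{0,\infty}^*\colon\KK(X)\to\KK(X)$, the value is the same, so the rightmost term in the chain is unambiguous and the inequality from Proposition~\ref{p:5.1} applies verbatim. After that, the proof reduces to a one-line reference to the two preceding results.
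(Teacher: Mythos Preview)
Your proposal is correct and matches the paper's approach exactly: the paper states the theorem as a direct consequence of Theorem~\ref{t:4.2} and Proposition~\ref{p:5.1}, with no additional argument beyond concatenating the two chains of inequalities. Your extra remarks on the well-definedness of $h(\vp_{0,\infty}^*)$ via Remark~\ref{r:3.7} and Propositions~\ref{p:hyper2}, \ref{p:2.3} are accurate and merely make explicit what the paper leaves implicit.
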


\begin{remark}\label{r:5.1}
In \cite[Example 3]{AL1}, we have shown an example indicating that the inequality $\hKT(\vp_{0,\infty}) \leq h(\vp^*_{0,\infty})$ does not hold for a certain continuous multivalued map $\vp=\vp_j$, $j\in\N\cup\{0\}$. As concerns the validity of the inequality $\hi(\vp_{0,\infty})\leq h(\vp^*_{0,\infty})$, for the branch entropy $\hi(\vp_{0,\infty})$ of $\vp_{0,\infty}$, it is an open problem.
\end{remark}

\section{Some further possibilities for particular subclasses of u.s.c. maps}
In this section, we will show that the desired inequality $h(\vp_{0,\infty})\leq h(\vp^*_{0,\infty})$ can be also satisfied, under certain additional restrictions imposed on suitable subclasses of u.s.c. maps, for some further definitions of topological entropy than those treated in the foregoing section.

In reply to an open problem posed in our paper \cite{AL2}, namely whether or not the inequality $\hCM^{\sepp}(\vp)\leq h(\vp^*)$ holds for at least continuous maps $\vp$, we are able to answer it only in a particular way by means of the following proposition.

\begin{prop}\label{p:6.1}
Let $(X,\U)$ be a compact uniform space and $\vp_{0,\infty}$ be a sequence of u.s.c. maps $\vp_j\colon X\to\KK(X)$, $j\in\N\cup\{0\}$. If there exist single-valued selections $f_j\subset \vp_j$, i.e. $f_j(x)\in\vp_j(x)$, $j\in\N\cup\{0\}$, for every $x\in X$, such that
\begin{align}\label{eq:p:6.1a}
p^{\H}(\vp^{[j]}(x),\vp^{[j]}(y)) = p(f^{[j]}(x),f^{[j]}(y)),
\end{align}
holds for every $j\in\N\cup\{0\}$, $p\in\U$ and all $x,y\in X\setminus A$, where $A\subset X$ is a finite (possibly empty) subset. Then the relations
\begin{align}\label{eq:p:6.1b}
\hCM^{\spa}(\vp_{0,\infty}) \leq \hCM^{\sepp}(\vp_{0,\infty})\leq h(f_{0,\infty}) = \hH(\vp_{0,\infty})\leq h(\vp^*_{0,\infty}),
\end{align}
are satisfied for the topological entropies of $\vp_{0,\infty}$, in the sense of Definitions~\ref{d:hCMspa}, \ref{d:hCMsep} and \ref{d:hH}, and $h(f_{0,\infty})$ of $f_{0,\infty}$, $h(\vp^*_{0,\infty})$ of $\vp^*_{0,\infty}$, with regard to \eqref{eq:r:3.7} in Remark~\ref{r:3.7}.
\end{prop}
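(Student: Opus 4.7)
The plan is to establish \eqref{eq:p:6.1b} by handling each of its four links separately, two of which are already at our disposal in the excerpt. The leftmost inequality $\hCM^{\spa}(\vp_{0,\infty}) \leq \hCM^{\sepp}(\vp_{0,\infty})$ is exactly Lemma~\ref{l:hCMspasep}, while the rightmost inequality $\hH(\vp_{0,\infty}) \leq h(\vp_{0,\infty}^*)$ is Proposition~\ref{p:5.1}; both apply directly in the present u.s.c.\ setting. The middle of the chain splits into the inequality $\hCM^{\sepp}(\vp_{0,\infty}) \leq h(f_{0,\infty})$ and the equality $h(f_{0,\infty}) = \hH(\vp_{0,\infty})$; these are the two estimates where the particular role of the selection $f_{0,\infty}$ and hypothesis \eqref{eq:p:6.1a} come into play.

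For $\hCM^{\sepp}(\vp_{0,\infty}) \leq h(f_{0,\infty})$ I would compare the pseudometric $p_n^{\CM}$ for $\vp_{0,\infty}$ with the Bowen pseudometric associated with $f_{0,\infty}$. Because each selection orbit $(x, f_0(x), \ldots, f^{[n-1]}(x))$ is itself an element of $\orbn(\vp_{0,\infty}, x)$, using this specific pair of orbits inside the infimum of Definition~\ref{d:3.5} yields
\[
p_n^{\CM}(x,y) \leq \max_{0 \leq i \leq n-1} p(f^{[i]}(x), f^{[i]}(y)),
\]
and the right-hand side coincides with $p_n^{\CM}$ computed on the single-valued system $f_{0,\infty}$. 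Hence every $(p, \ep, n)_{\CM}$-separated set for $\vp_{0,\infty}$ is also $(p, \ep, n)_{\CM}$-separated for $f_{0,\infty}$, so $s_{\CM}(\vp_{0,\infty}, p, \ep, n) \leq s_{\CM}(f_{0,\infty}, p, \ep, n)$, and the entropy bound follows after $\limsup_{n \to \infty} \frac{1}{n} \log$ and supremum over $p \in \U$, $\ep > 0$; the single-valued side equals $h(f_{0,\infty})$ by Remark~\ref{r:3.7}.

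The equality $h(f_{0,\infty}) = \hH(\vp_{0,\infty})$ is where hypothesis \eqref{eq:p:6.1a} enters decisively. For $x, y \in X \setminus A$ the identity $p^{\H}(\vp^{[k]}(x), \vp^{[k]}(y)) = p(f^{[k]}(x), f^{[k]}(y))$ implies that a set $S \subset X \setminus A$ is $(p, \ep, n)$-separated for $\vp_{0,\infty}$ in the Hausdorff sense of Definition~\ref{d:hHse} if and only if $S$ is $(p, \ep, n)$-separated for the single-valued $f_{0,\infty}$. Trimming a maximal separated set on either side by the (at most $\card A$) points of $A$ then gives
\[
\bigl| s_{\H}(\vp_{0,\infty}, p, \ep, n) - s(f_{0,\infty}, p, \ep, n) \bigr| \leq \card A,
\]
and this additive constant disappears upon passing to $\frac{1}{n} \log$ and $\limsup_{n \to \infty}$. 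Taking $\sup_{p \in \U, \ep > 0}$ and invoking Definition~\ref{d:hH} together with Remark~\ref{r:3.7} produces the equality. The main obstacle is precisely this last step: one must confirm that the exceptional set $A$, although finite, can be absorbed in \emph{both} directions of the separation comparison without affecting exponential growth, and that the pointwise identification \eqref{eq:p:6.1a} transfers cleanly between the Hausdorff-valued criterion for $\vp_{0,\infty}$ and the single-valued Bowen criterion for $f_{0,\infty}$ uniformly in $p, \ep, n$.
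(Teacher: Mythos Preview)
Your proof is correct and follows essentially the same route as the paper: the outer inequalities are taken from Lemma~\ref{l:hCMspasep} and Proposition~\ref{p:5.1} (the paper packages these via Theorems~\ref{t:3.1} and~\ref{t:5.2}), the selection inequality $\hCM^{\sepp}(\vp_{0,\infty})\leq h(f_{0,\infty})$ is the content of Lemma~\ref{l:hCMsel}/Theorem~\ref{t:3.1} which you spell out directly, and the key equality $h(f_{0,\infty})=\hH(\vp_{0,\infty})$ is obtained exactly as in the paper via the bound $|s_{\H}(\vp_{0,\infty},p,\ep,n)-s(f_{0,\infty},p,\ep,n)|\leq\card A$.
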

\begin{proof}
According to Theorem~\ref{t:3.1}, we have (cf. Remark~\ref{r:3.7}),
\[
\hCM^{\spa}(\vp_{0,\infty}) \leq \hCM^{\sepp}(\vp_{0,\infty})\leq h(f_{0,\infty})
\]
jointly with
\[
h(f_{0,\infty}) = \hH(\vp_{0,\infty}),
\]
because
\[
s_{\H}(\vp_{0,\infty},p,\ep,n) - \card A \leq s(f_{0,\infty},p,\ep,n) \leq s_{\H}(\vp_{0,\infty},p,\ep,n) + \card A
\]
holds as a consequence of the hypotheses \eqref{eq:p:6.1a}.

After all, we get
\[
\hCM^{\spa}(\vp_{0,\infty}) \leq \hCM^{\sepp}(\vp_{0,\infty})\leq h(f_{0,\infty}) = \hH(\vp_{0,\infty}).
\]

In view of Theorem~\ref{t:5.2},
\[
\hH(\vp_{0,\infty}) \leq h(\vp^*_{0,\infty})
\]
still holds, which already leads to \eqref{eq:p:6.1b}.
\end{proof}

\begin{example}\label{e:6.1}
As an illustrative example of the application of Proposition~\ref{p:6.1}, we can consider the u.s.c. map $\vp=\vp_j:[0,1]\to\KK([0,1])$, $j\in\N\cup\{0\}$, where $\vp=f\cup f_{0,1}$,
\[
f(x):=
\begin{cases}
2x,& \mbox{ for $x\in[0,\frac12)$,}\\
2(1-x),& \mbox{ for $x\in[\frac12,1]$,}
\end{cases}
\]
is the standard tent map and
\[
f_{0,1}(x):=
\begin{cases}
0,& \mbox{ for $x\in(0,1)$,}\\
\{0,1\},& \mbox{ for $x\in\{0,1\}$.}
\end{cases}
\]

Since 
\begin{align*}
\max_{i\in\{0,\ldots,n-1\}} \{\dH(\vp_i\circ\ldots\circ\vp_0(x),\vp_i\circ\ldots\circ\vp_0(y))\}\\
= \max_{i\in\{0,\ldots,n-1\}} \{\dH(f_i\circ\ldots\circ f_0(x),f_i\circ\ldots\circ f_0(y))\}
\end{align*}
holds for all $x,y\in[0,1]\setminus \{0,1\} = (0,1)$, i.e. $\card A=2$, the condition \eqref{eq:p:6.1a} is trivially satisfied. Hence, applying Proposition~\ref{p:6.1}, we arrive at the inequalities \eqref{eq:p:6.1b}.

Moreover, since also
\begin{align*}
&\max_{i\in\{1,\ldots,n\}} \{ \dH (\pi_i(\orbn(\vp_{0,\infty};x),\pi_i(\orbn(\vp_{0,\infty};y))))\}\\
&= \max_{i\in\{0,\ldots,n-1\}} \{ \dH( f^{i}(x)\cup\{0,1\},f^{i}(y)\cup\{0,1\})\} = \max_{i\in\{0,\ldots,n-1\}} d(f^{i}(x),f^{i}(y))
\end{align*}
holds for all $x,y\in[0,1]\setminus\{0,1\} = (0,1)$, which again leads to the equalities
\[
s_{\mathfrak{i}}(\vp,d,\ep,n)-2 \leq s(f,d,\ep,n) \leq s_{\mathfrak{i}}(\vp,d,\ep,n)+2,
\]
where $d$ is the Euclidean metric on $[0,1]$, we still get that $\hH(\vp)=\hi(\vp)=h(f)$.

Summing up, the relations
\begin{align*}
\hCM^{\spa}(\vp) \leq \hCM^{\sepp}(\vp)\leq \log 2 = h(f) = \hH(\vp) = \hi(\vp) \leq h(\vp^*)
\end{align*}
are satisfied, which partially answers also an open problem posed in Remark~\ref{r:5.1}.

On the other hand,
\[
\hrsp(\vp) = \hrse(\vp)= \hCM^{\sepp}(\vp) = \hCM^{\spa}(\vp) = h(0) = 0,
\]
according to Theorem~\ref{t:3.1}, which already trivially implies that $\hCM^{\sepp}(\vp) \leq h(\vp^*)$.

\end{example}

\begin{example}\label{e:6.2}
Consider the u.s.c. maps $\vp_{\frac12}:[0,1]\to\KK([0,1])$ and $\vp_1:[0,1]\to\KK([0,1])$, where
\begin{align*}
\vp_{\frac12}(x):=
\begin{cases}
2x,& \mbox{ for $0\leq x< \frac12$,}\\
\{0,1\},& \mbox{ for $x=\frac12$,}\\
2x-1,& \mbox{ for $\frac12<x\leq 1$,}
\end{cases}
\mbox{ and }
\vp_{1}(x):=
\begin{cases}
2x,& \mbox{ for $0\leq x< \frac12$,}\\
[0,1],& \mbox{ for $x=\frac12$,}\\
2x-1,& \mbox{ for $\frac12<x\leq 1$.}
\end{cases}
\end{align*}

Observe that $\vp_{0}:[0,1]\to[0,1]$, where
\begin{align*}
\vp_0(x):=
\begin{cases}
2x,& \mbox{ for $0\leq x \leq \frac12$,}\\
2x-1,& \mbox{ for $\frac12<x\leq 1$,}
\end{cases}
\end{align*}
is a discontinuous (at $x=\frac12$) single-valued selection of both $\vp_{\frac12}$ and $\vp_1$, $\vp_0\subset \vp_{\frac12} \subset \vp_1$, i.e. $\vp_0(x)\in\vp_{\frac12}(x)\subset\vp_1$, for every $x\in[0,1]$.

In \cite[Examples 4.8 and 4.11]{CMM} (cf. also \cite[Theorem 3.2]{CMM}), it was shown that
\[
h(\vp_0) = \hCM^{\sepp}(\vp_{\frac12}) = \hCM^{\spa}(\vp_{\frac12}) = \hCM^{\sepp}(\vp_{1}) = \hCM^{\spa}(\vp_{1}) = \log 2.
\]

One can readily check that (cf. Lemma~\ref{l:hCMsel})
\begin{align*}
0 &= \hCM^{\sepp}(\vp_{0}\cup\{0\}) = \hCM^{\spa}(\vp_{0}\cup\{0\}) = \hCM^{\sepp}(\vp_{\frac12}\cup\{0\}) = \hCM^{\spa}(\vp_{\frac12}\cup\{0\}) \\&
= \hCM^{\sepp}(\vp_{1}\cup\{0\}) = \hCM^{\spa}(\vp_{1}\cup\{0\}) \\
&\leq \min \{ h((\vp_0\cup\{0\})^{*}), h((\vp_{\frac12}\cup\{0\})^{*}), h((\vp_1\cup\{0\})^{*})\}.
\end{align*}

Now, we will consider the nonautonomous case for $\psi_{0,\infty}$, where $\bar{j}>0$ and
\begin{align*}
\psi_j :=
\begin{cases}
\vp_1, &\mbox{ for $j=0,1,\ldots, \bar{j}$,}\\
\vp_{\frac12}\cup\{0\}, &\mbox{ for $j>\bar{j}$.}
\end{cases}
\end{align*}

Observe that 
\[
\dH(\psi^{[j]}(x),\psi^{[j]}(y)) = d(\vp_0^{j}(x), \vp_0^{j}(y))
\]
holds for every $j\in\N\cup\{0\}$ and all $x,y\in[0,1]\setminus A_{\bar{j}}$, where the exceptional set $A_{\bar{j}}$ takes the form
\[
A_{\bar{j}} = \{ \frac1{2^{\bar{j}}}, \frac2{2^{\bar{j}}},\ldots, 1 - \frac1{2^{\bar{j}}} \},
\]
i.e $\card A_{\bar{j}} = 2^{\bar{j}}-1$ is finite.

Hence, applying Proposition~\ref{p:6.1}, we obtain \eqref{eq:p:6.1b} with $\hH(\psi_{0,\infty}) = h(\vp_0) = \log 2$, i.e. (cf. Lemma~\ref{l:hCMsel}) 
\[
0 = \hCM^{\spa}(\psi_{0,\infty}) = \hCM^{\sepp}(\psi_{0,\infty}) < \log 2 \leq h(\psi_{0,\infty}^*).
\]

The same results can be obtained, when replacing $\vp_1$ by $\vp_{1}\cup\{0\}$ or by $\vp_{\frac12}$ in the definition of $\psi_j$, for $j=0,1,\ldots,\bar{j}$.

If, in particular, $\psi_j=\vp_{\frac12}\cup\{0\}=\vp_0\cup\{0\}$, for every $j\in\N\cup\{0\}$, then
\[
\dH((\vp_{\frac12}\cup\{0\})^j(x),(\vp_{\frac12}\cup\{0\})^j(y)) = d(\vp_0^{j}(x),\vp_0^{j}(y))
\]
holds for every $j\in\N\cup\{0\}$ and all $x,y\in[0,1]$, i.e. even with $A\neq\emptyset$, and subsequently also
\[
s_{\H}(\vp_{\frac12}\cup\{0\},d,\ep,n) = s(\vp_0,d,\ep,n).
\]
After all, again
\[
0 = \hCM^{\sepp}(\vp_{\frac12}\cup\{0\}) < \log 2 = h(\vp_0) = \hH(\vp_{\frac12}\cup\{0\}) \leq h((\vp_{\frac12}\cup\{0\})^*),
\]
i.e. the strict inequality
\[
\hCM^{\sepp}(\vp_{\frac12}\cup\{0\}) < h((\vp_{\frac12}\cup\{0\})^*).
\]

\end{example}

Finally, we will show that the Hausdorff metric $\dH$, and subsequently the topological entropy $\hH$, can be often replaced by the Borsuk metric $\dB$, defined in \cite{Br}, and the related topological entropy denoted as $\hB$, provided the values $\vp_j^{k}(x)$ of the maps $\vp_j^k: X\to \KK(X)$, $j\in\N\cup\{0\}$, $k\in\N$, in a compact metric space $X$, are compact absolute neighbourhood retracts.

Let us recall that a metric space $\vp$ is an \emph{absolute neighbourhood retract} (shortly, $\ANR$) if, for every space $Z$ and every closed subset $A\subset Z$, each continuous map $f:A\to Y$ is extendable over some open neighbourhood of $A$ in $Z$. For more details about $\ANR$-spaces, see e.g. \cite[Chapter I.2]{AG}, \cite[Chapter 5.7]{HP}.

Hence, let $(Y,d)$ be a compact metric space and $(\KK(Y),\dH)$ be the related hyperspace defined in Section~2. By the \emph{Borsuk metric} $\dB$, we mean
\begin{align*}
\dB(C,D):= \inf\{&\ep>0: \mbox{$\exists$ continuous maps $f:C\to D$ and $g:D\to C$}\\ 
&\mbox{such that $d(x,f(x))\leq\ep$, for every $x\in C$,}\\
&\mbox{and $d(y,g(y))\leq\ep$, for every $y\in D$}\},
\end{align*}
for any $C,D\in\KK(Y)$.

If $C,D$ are not necessarily compact and $\dH(C,D)=0$, resp. $\dB(C,D)=0$, need not imply that $C=D$, then we speak about the respective \emph{pseudometrics}.

It is well known that the Borsuk continuity implies the Hausdorff continuity, but not vice versa in general. This claim follows from the inequality (see e.g. \cite[p. 25]{Gr})
\[
\dB(C,D)\geq\dH(C,D), \mbox{ for any $C,D\in\KK(Y)$,}
\]
and in particular
\[
\dB(\vp(x),\vp(y)) \geq \dH(\vp(x),\vp(y)),
\]
for all $x,y\in X$.

By the same inequality, we obtain that $\hB(\vp)\geq\hH(\vp)$, resp. $\hB(\vp_{0,\infty})\geq\hH(\vp_{0,\infty})$, hold for the topological entropies $\hH(\vp)$ of $\vp$, resp. $\hH(\vp_{0,\infty})$ of $\vp_{0,\infty}$, in the sense of Definition~\ref{d:hH}, and $\hB(\vp)$ of $\vp$, resp. $\hB(\vp_{0,\infty})$ of $\vp_{0,\infty}$, which can be simply defined when just replacing the metrics $\dH$ by $\dB$ in Definition~\ref{d:hH}.

On the other hand, if the values $\vp^k_j(x)$ of the maps $\vp^k_j:X\to\KK(X)$, $j\in\N\cup\{0\}$, $k\in\N$, are compact absolute neighborhood retracts, then we get the equality (see \cite[Theorem 5.13]{Br})
\[
\dB(C,D) = \dH(C,D), \mbox{ for any $C,D\in\KK(X)$,}
\]
and subsequently $\hB(\vp)=\hH(\vp)$, resp. $\hB(\vp_{0,\infty})=\hH(\vp_{0,\infty})$.

In particular, it occurs provided $X$ consists of, for instance, finitely many points.

Summing up, we can formulate the following proposition.

\begin{prop}\label{p:6.2}
Let $(X,d)$ be a compact metric space and $\vp_{0,\infty}$ be a sequence of u.s.c. maps $\vp_j:X\to\KK(X)$, $j\in\N\cup\{0\}$, such that the values $\vp^k_j(x)$, $j\in\N\cup\{0\}$, $k\in\N$, are compact $\ANR$-spaces, for all $x\in X$. Then the equality
\[
\hB(\vp_{0,\infty}) = \hH(\vp_{0,\infty})
\]
holds for the topological entropies $\hB(\vp_{0,\infty})$, $\hH(\vp_{0,\infty})$ of $\vp_{0,\infty}$.
\end{prop}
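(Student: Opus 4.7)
The plan is to reduce the equality of the two parametric entropies to a pointwise equality of the associated pseudometrics on the compact values in question, exploiting exactly the cited Borsuk identity $\dB(C,D)=\dH(C,D)$ for compact $\ANR$-sets $C$ and $D$.

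First I would record the inequality $\hB(\vp_{0,\infty})\geq\hH(\vp_{0,\infty})$, which is already noted in the paragraph preceding the proposition and is a direct consequence of $\dB(C,D)\geq\dH(C,D)$ for arbitrary $C,D\in\KK(X)$: every $(d,\ep,n)$-separated set with respect to $d_{\mathrm H}$ remains $(d,\ep,n)$-separated with respect to $d_{\mathrm B}$, so $s_{\H}\leq s_{\B}$ holds termwise and the same passage to $\limsup$ and supremum gives the desired inequality between entropies.

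For the reverse inequality $\hB(\vp_{0,\infty})\leq\hH(\vp_{0,\infty})$, I would argue as follows. By the hypothesis, for every $k\in\N$ and every $x\in X$ the set $\vp^{[k]}(x)=\vp_{k-1}\circ\ldots\circ\vp_0(x)$ is a compact $\ANR$. Applying \cite[Theorem 5.13]{Br} to the pair $\vp^{[k]}(x),\vp^{[k]}(y)\in\KK(X)$, one gets
\begin{equation*}
\dB(\vp^{[k]}(x),\vp^{[k]}(y)) = \dH(\vp^{[k]}(x),\vp^{[k]}(y)),
\end{equation*}
for all $x,y\in X$ and all $k\in\N\cup\{0\}$ (with the trivial case $k=0$ included by $\vp^{[0]}=\id_X$ and singletons being $\ANR$). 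Consequently, the conditions defining $(d,\ep,n)_{\B}$-separated and $(d,\ep,n)_{\B}$-spanning sets are literally the same as those defining $(d,\ep,n)_{\H}$-separated and $(d,\ep,n)_{\H}$-spanning sets of Definitions~\ref{d:hHse} and \ref{d:hHsp}. Hence $s_{\B}(\vp_{0,\infty},d,\ep,n)=s_{\H}(\vp_{0,\infty},d,\ep,n)$ and $r_{\B}(\vp_{0,\infty},d,\ep,n)=r_{\H}(\vp_{0,\infty},d,\ep,n)$ for all admissible parameters, and passing to $\limsup_{n\to\infty}\frac1n\log(\cdot)$ and taking the supremum over $\ep>0$ yields $\hB(\vp_{0,\infty})=\hH(\vp_{0,\infty})$.

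The only delicate point (and where I would pause to justify carefully) is to make sure the $\ANR$-hypothesis on $\vp^{k}_{j}(x)$ really covers every compact set at which we evaluate $\dB$ and $\dH$ in the definition of $\hB(\vp_{0,\infty})$ and $\hH(\vp_{0,\infty})$; that is, every value of every composition $\vp^{[k]}$ occurring in Definitions~\ref{d:hHse}--\ref{d:hH}. Once this is settled, no further estimate is needed: the argument is not asymptotic at all, but a termwise identification of the counting functions, so the result is an \emph{equality} rather than merely an inequality.
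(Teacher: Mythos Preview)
Your proposal is correct and follows essentially the same route as the paper: the argument there (laid out in the discussion immediately preceding Proposition~\ref{p:6.2}) uses $\dB\geq\dH$ for one inequality and then invokes \cite[Theorem~5.13]{Br} to obtain $\dB(\vp^{[k]}(x),\vp^{[k]}(y))=\dH(\vp^{[k]}(x),\vp^{[k]}(y))$ on the compact $\ANR$ values, whence the defining separated/spanning conditions coincide termwise. Your remark that one must check the $\ANR$-hypothesis on $\vp^{k}_{j}(x)$ indeed covers the compositions $\vp^{[k]}(x)$ appearing in Definitions~\ref{d:hHse}--\ref{d:hH} is a fair point of care, but does not alter the approach.
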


Proposition~\ref{p:6.2} can be directly applied to the foregoing Examples~\ref{e:6.1} and \ref{e:6.2}.

\begin{example}[{continued Example~\ref{e:6.1}}]\label{e:6.3}
Since $\vp=\vp_j:[0,1]\to\KK([0,1])$, $j\in\N\cup\{0\}$, are under the assumptions in Example~\ref{e:6.1} u.s.c., and $\vp^k(x)\in\{0,1,f^k(x)\}\subset\ANR$, $k\in\N$, for all $x\in X$, we obtain, by means of Proposition~\ref{p:6.2}, the relations
\[
h(\vp^*)\geq\hH(\vp) = \hB(\vp) = \log 2 > \hCM^{\sepp}(\vp) = 0.
\]
\end{example}

\begin{example}[{continued Example~\ref{e:6.2}}]\label{e:6.4}
Since $\psi_j:[0,1]\to\KK([0,1])$, resp. $\vp_{\frac12}\cup\{0\}=\vp_0\cup\{0\}:[0,1]\to\KK([0,1])$, $j\in\N\cup\{0\}$, are under the assumptions in Example~\ref{e:6.2} u.s.c., and $\psi^k_j(x)\in\{\{0\},[0,1],\vp_0^k(x)\}\subset\ANR$ as well as $(\vp_{\frac12}\cup\{0\})^k(x)\in\{\{0\},\vp^k_0(x)\}\subset\ANR$, $k\in\N$, for all $x\in X$, we obtain, by means of Proposition~\ref{p:6.2}, the relations
\begin{align*}
\hCM^{\sepp}(\psi_{0,\infty}) < \log 2 = \hB(\psi_{0,\infty}) = \hH(\psi_{0,\infty}) \leq h(\psi^*_{0,\infty}),
\end{align*}
resp. in particular for $\psi_j=\vp_{\frac12}\cup\{0\}=\vp_0\cup\{0\}$, $j\in\N\cup\{0\}$,
\[
\hCM^{\sepp}(\vp_{\frac12}\cup\{0\})< \log 2 = \hB(\vp_{\frac12}\cup\{0\}) = \hH(\vp_{\frac12}\cup\{0\})\leq h((\vp_{\frac12}\cup\{0\})^*).
\]
\end{example}

\begin{remark}
Observe that if, in Examples~\ref{e:6.2} and~\ref{e:6.4}, $\psi_j=\vp_1\cup\{0\}$ or $\psi_j=\vp_1$, for every $j\in\N\cup\{0\}$, then Proposition~\ref{p:6.1} cannot be applied. Let us note that in \cite{WK} the lower estimate $\hKT(\vp_1)\geq \log 3$ was obtained and so, in view of Lemma~\ref{l:3.2},
\[
\hKT(\vp_1\cup\{0\})\geq \hKT(\vp_1)\geq \log 3.
\]
Furthermore, just by the same Lemma~\ref{l:3.2}, (cf. Examples~\ref{e:6.2} and \ref{e:6.4})
\begin{align*}
\hKT(\psi_{0,\infty})&\geq h(\vp_0)=\log 2,\\
\hKT(\vp_{\frac12}\cup\{0\})&=\hKT(\vp_0\cup\{0\}) \geq h(\vp_0) = \log 2.
\end{align*}
On the other hand, as already pointed out in Remark~\ref{r:5.1}, the inequality $\hKT(\vp)\leq h(\vp^*)$ need not be satisfied for a general $\vp$. 
\end{remark}
\section{Concluding remarks}

Proposition~\ref{p:6.1} can be still improved by means of \cite[Theorem~2.7]{Sh} (in an autonomous case, cf. \cite{BS,KO}) in the following way.

If, in particular, $(X,d)$ is a compact metric space and the single-valued selections $f_j\subset\vp_j$, $j\in\N\cup\{0\}$, are equi-continuous, then by means of \cite[Theorem 2.7]{Sh}, $h(f^*_{0,\infty}) = \infty$, provided $h(f_{0,\infty})>0$.

Thus, since the equalities $h(f_{0,\infty})=\hH(\vp_{0,\infty})$ as well as $h(f^*_{0,\infty})=h(\vp^*_{0,\infty})$ are satisfied under the assumptions of Proposition~\ref{p:6.1}, we obtain for $h(f_{0,\infty})>0$ not only $h(f^*_{0,\infty})=\infty$, but also $h(\vp^*_{0,\infty})=\infty$.

Subsequently, since in Examples~\ref{e:6.1} and~\ref{e:6.3} we have
\[
\hi(\vp)=\hH(\vp)=\hB(\vp)=h(f) = \log 2,
\]
despite of $\hCM^{\sepp}(\vp)=0$, we get $h(f^*)=h(\vp^*)=\infty$, and trivially $\hKT(\vp) \leq h(\vp^*)$.

On the other hand, it is not clear whether or not the inequalities 
\[
\hCM^{\sepp}(\vp)\leq h(\vp^*), \mbox{ resp. }\hCM^{\sepp}(\vp_{0,\infty})\leq h(\vp^*_{0,\infty}),
\]
or the inequalities
\[
\hCM^{\sepp}(\vp)\leq \hH(\vp), \mbox{ resp. }\hCM^{\sepp}(\vp_{0,\infty})\leq \hH(\vp_{0,\infty})
\]
hold for general u.s.c. maps $\vp,\vp_j:X\to\KK(X)$, $j\in\N\cup\{0\}$.

If the latter particular case is not true, then the entropies $\hCM^{\sepp}$ are obviously uncomparable with those of $\hH$ in general.

Otherwise, $\hL(\vp_{0,\infty}) \leq h(\vp^*_{0,\infty})$, but not necessarily $\hU(\vp_{0,\infty})\leq h(\vp^*_{0,\infty})$, would be also satisfied. For $\hU(\vp_{0,\infty})\leq h(\vp^*_{0,\infty})$, we would even obtain that
\[
\hL(\vp_{0,\infty}) = \hCM^{\spa}(\vp_{0,\infty})\leq \hCM^{\sepp}(\vp_{0,\infty}) \leq \hU(\vp_{0,\infty}) \leq h(\vp^*_{0,\infty}).
\]

As a conclusion, we have shown in this paper that, apart from the entropies $\hH$ and $\hrsp$, $\hrse$, for which the desired inequalities $\hrsp(\vp_{0,\infty})\leq \hrse(\vp_{0,\infty})\leq \hH(\vp_{0,\infty}) \leq h(\vp^*_{0,\infty})$ are satisfied, and so resembling such a property of the single-valued case, we can also consider for the same goal, under suitable additional restrictions, some further definitions of topological entropy like those considered in \cite{CMM}. Moreover, the multivalued maps under consideration $\vp_j:X\to\KK(X)$, $j\in\N\cup\{0\}$, can be u.s.c. and the induced single-valued hypermaps $\vp^*_j:\KK(X)\to\KK(X)$, $j\in\N\cup\{0\}$, can be even discontinuous, unlike to a single-valued case. That is why we can speak with this respect about a multivalued effect (whence the title of our paper).

\section*{Acknowledgements}
The authors were supported by the Grant IGA\_PrF\_2024\_006 ``Mathematical Models'' of the Internal Grant Agency of Palack\'y University in Olomouc.
.

\bibliography{AL-EntropyOfDiscontinuous-arxiv}\bibliographystyle{siam}

\end{document}